\newcommand{\R}{\mathbb{R}}
\newcommand{\inr}[1]{\left\langle #1 \right\rangle}
\newcommand{\ind}{\mathbbm{1}}
\newcommand{\E}{\mathbb{E}}
\newcommand{\eps}{\varepsilon}
\newcommand{\cF}{{\cal F}}
\newtheorem{lemma}{Lemma}
\newtheorem{theorem}{Theorem}
\numberwithin{equation}{section}
\def \remark {\noindent {\bf Remark.}\ \ }
\def\IND{\mathbbm{1}}
\newcommand{\X}{\mathcal{X}}
\newcommand{\cE}{\mathcal{E}}
\newcommand{\EXP}{\mathbb{E}}
\renewcommand{\E}{\mathbb{E}}
\newcommand{\PROB}{\mathbb{P}}
\newcommand{\var}{\mathrm{Var}}
\begin{document}

\title{Noise sensitivity of the top eigenvector of a Wigner matrix
\thanks{
G\'abor Lugosi was supported by
the Spanish Ministry of Economy and Competitiveness,
Grant PGC2018-101643-B-I00;
``High-dimensional problems in structured probabilistic models - Ayudas Fundaci\'on BBVA a Equipos de Investigaci\'on Cientifica 2017'';
and Google Focused Award ``Algorithms and Learning for AI''. Charles Bordenave was supported by by the research grants ANR-14-CE25-0014 and ANR-16-CE40-0024-01. Nikita Zhivotovskiy was supported by RSF grant No. 18-11-00132.
}}
\author{Charles Bordenave\thanks{Institut de Math\'ematiques de Marseille, CNRS \& Aix-Marseille University, Marseille, France.}  \and G\'abor Lugosi
\thanks{Department of Economics and Business, Pompeu
  Fabra University, Barcelona, Spain, gabor.lugosi@upf.edu}
\thanks{ICREA, Pg. Lluís Companys 23, 08010 Barcelona, Spain}
\thanks{Barcelona Graduate School of Economics}
\and Nikita Zhivotovskiy\thanks{This work was prepared while Nikita Zhivotovskiy was a postdoctoral fellow at the department of Mathematics, Technion I.I.T. and researcher at National University Higher School of Economics. Now at Google Research, Brain Team.}
}

\maketitle

\begin{abstract}
We investigate the noise sensitivity of the top eigenvector of a
Wigner matrix in the following sense. Let $v$ be the top eigenvector 
of an $N\times N$ Wigner matrix. Suppose that $k$ randomly chosen entries of the
matrix are resampled, resulting in another realization of the Wigner
matrix with top eigenvector $v^{[k]}$. We prove that, with high
probability,  when   $k \ll N^{5/3-o(1)}$, then $v$ and $v^{[k]}$ 
are almost collinear and when $k\gg N^{5/3}$, then $v^{[k]}$ is almost orthogonal to
$v$.
\end{abstract}

\section{Introduction}

In this paper we study the \emph{noise sensitivity} of top
eigenvectors of  Wigner matrices. For a positive integer
$N$, let $X=(X_{i,j})$ be a symmetric $N\times N$ matrix such that, for 
$i\leq j$, the $X_{i,j}$ are independent real random
variables, such that for some constant $\delta>0$ and for all $ i \leq
j$, $\E X_{i,j} = 0$ and $\E \exp ( |X_{i,j}|^\delta ) \leq
1/\delta$.   Note that this assumption is satisfied for a wide
  class of distributions with a sufficiently light tail. Uniformly
  bounded, sub-gaussian, and
  sub-exponential distributions fall in this class. 
To guarantee that $X$ is a symmetric matrix, we set
$X_{i,j} = X_{j,i}$. Finally, we assume that the off-diagonal entries
have the unit variance: for all $i \ne j$,  $\E X_{ij}^2 = 1$ and for
all $i$, $\E X_{ii}^2 = \sigma_0^2$, for some $\sigma_0 \geq
0$.
Throughout this text, we 
call such matrix $X$ a Wigner matrix. 
 In this paper we are concerned with large matrices and
  the main results are asymptotic, concerning $N\to \infty$. 
The distribution of the entries $X_{i,j}$ may change with
$N$ though we suppress this dependence in the notation. However,
the values of $\sigma_0$ and $\delta$ are assumed to be the same for all $N$.

Let $\lambda=\sup_{w\in S^{N-1}} \inr{w,Xw}$ be the top eigenvalue 
of $X$ and let $v$ denote the corresponding   unit eigenvector. In this paper 
we study the noise sensitivity of $v$. In particular, we are
interested in the behavior of the top eigenvector $v^{[k]}$ of the
symmetric matrix $X^{[k]}$
obtained by resampling $k$ random entries of $X$. The main finding of the
paper is that, with high probability, when   $k \leq N^{5/3-o(1)}$, then $v$ and $v^{[k]}$ are almost collinear and when $k\gg N^{5/3}$, then $v^{[k]}$ is almost orthogonal to
$v$.

\subsection*{Related work and proof technique}
 
Noise sensitivity is an important notion in probability that has been
extensively studied since the pioneering work of Benjamini, Kalai, and
Schramm \cite{BeKaSc99}. Noise sensitivity has mostly been studied 
in the context of Boolean functions and it has been shown to have deep 
connections with threshold phenomena, measure concentration, and
isoperimetric inequalities, see
Talagrand \cite{Tal94a},
Friedgut and Kalai \cite{FrKa96},
Kahn, Kalai, and Linial \cite{KaKaLi88},
Bourgain, Kahn, Kalai, Katznelson, and Linial \cite{BoKaKaKaLi92}
for some of the key early work and 
Garban \cite{Gar11},
Garban and Steif \cite{GaSt14},
Kalai and Safra \cite{KaSa06},
O'Donnell \cite{ODo14}
for surveys. 
The key techniques for studying noise sensitivity typically use
elements of harmonic analysis, in particular, hypercontractivity
(\cite{Tal94a}, \cite{KaKaLi88})
but also the ``randomized algorithm'' approach of Schramm and Steif
\cite{ScSt10}
and other techniques, see Garban, Pete, and Schramm \cite{GaPeSc10}.

Our approach is inspired by Chatterjee's work \cite{Cha16} who shows
that, for functions of independent standard Gaussian random variables,
the notion of noise sensitivity (or ``chaos'' as Chatterjee calls it)
is deeply related to the notion of ``superconcentration''.

In fact, a result in a similar spirit to ours for the \emph{Gaussian Unitary Ensemble} was proved by
Chatterjee \cite[Section 3.6]{Cha16}. However, instead of resampling 
random entries of the matrix, the perturbations 
considered in \cite{Cha16} are different. In Chatterjee's model, \emph{every} entry of
the matrix $X$ is perturbed by replacing $X$ by
$Y=e^{-t}X+\sqrt{1-e^{-2t}}X'$ where $X'$ is an independent copy of $X$
and $t>0$. It is proved in \cite{Cha16} that the top eigenvectors of
$X$ and $Y$ are approximately orthogonal (in the sense that the
expectation
of their inner product goes to zero as $N\to\infty$) as soon as $t \gg
N^{-1/3}$.

Chatterjee uses this example to illustrate how ``superconcentration''
implies ``chaos''. His techniques crucially depend on the Gaussian
assumption as in that case explicit formulas may be exploited.
Our techniques are similar in the sense that our starting point is also
``superconcentration'' (i.e., the fact that the variance of the
largest eigenvalue of a Wigner matrix is small). However, outside of the Gaussian realm, 
the notions of superconcentration and chaos are murkier. Starting from
a general formula for the variance of a function of independent random
variables, due to Chatterjee \cite{Cha05}, we establish a monotonicity 
lemma that allows us to make the connection between the variance of
the top eigenvalue and the inner product of interest. Then we use 
the fact that the top eigenvector has a small variance (i.e., in a
sense, it is ``superconcentrated''). The monotonicity lemma 
may be of independent interest and it may have further uses 
when one tries to prove that ``superconcentration implies chaos''
for functions of independent--not necessarily Gaussian--random
variables.

\subsection*{Result}

To formally describe the setup, let $X$ be a symmetric $N\times N$
Wigner matrix as defined above.
For a positive integer $k \le \binom{N}{2} +  N = N(N+1)/2$, let the random matrix $X^{[k]}$ be
defined as follows. Let $S_k=\{(i_1,j_1),\ldots,(i_k,j_k)\}$
be a set of $k$ pairs 
chosen uniformly at random (without replacement) from the set 
of all ordered pairs $(i,j)$ of indices with $1\le
i\le j\le N$. We also assume that $S_k$ is independent of the entries of $X$. The entries of $X^{[k]}$   above the diagonal are
\[
    X^{[k]}_{i,j} = \left\{ \begin{array}{ll}  
                            X'_{i,j} & \text{if $(i,j)\in S_k$}      \\
                             X_{i,j} & \text{otherwise},
                      \end{array} \right.
\]
where $(X'_{i,j})_{1\le i\le j\le N}$ are independent random
variables, independent of $X$ and $X'_{i,j}$ has the same distribution
as $X_{i,j}$, for all $i\le j$. In words, $X^{[k]}$ is obtained from $X$ by resampling $k$
random entries of the matrix   above and including the diagonal and also the corresponding
terms   below the diagonal. Clearly, $X^{[k]}$ has the same distribution
as $X$. Denote unit eigenvectors corresponding to the
largest eigenvalues of $X$ and $X^{[k]}$ by $v$ and $v^{[k]}$,
respectively. 
 Note that with
  overwhelming probability, the spectrum of a Wigner matrix is simple
  and, in particular, the  top unit eigenvector is unique (up to
  changing the sign), see \cite{MR2760897}.

Our main results are the following.

\begin{theorem}
\label{thm:main}
Assume that $X$ is a Wigner matrix as above. If $k/N^{5/3}\to \infty$, then
\[
   \EXP \left|\inr{v,v^{[k]}}\right| = o(1)~.
\]
\end{theorem}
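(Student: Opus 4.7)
The plan is to pair a simple identity linking $\langle v, v^{[k]}\rangle$ to the eigenvalue gap $\lambda^{[k]}-\lambda$ with two a priori estimates: an upper bound on the gap via superconcentration, and a lower bound on the right-hand side of the identity via a monotonicity/decoupling argument. Set $\alpha:=\langle v,v^{[k]}\rangle$ and $\Delta:=X^{[k]}-X$. The eigenvalue equations $Xv=\lambda v$ and $X^{[k]}v^{[k]}=\lambda^{[k]}v^{[k]}$ give $(v^{[k]})^{\top}Xv=\lambda\alpha$ and $(v^{[k]})^{\top}X^{[k]}v=\lambda^{[k]}\alpha$. Subtracting, squaring, and taking expectations yields
\[
\mathbb{E}\bigl[\alpha^{2}(\lambda^{[k]}-\lambda)^{2}\bigr] \;=\; \mathbb{E}\bigl[((v^{[k]})^{\top}\Delta v)^{2}\bigr]. \qquad(\star)
\]

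For the left-hand side of $(\star)$, since $\alpha^{2}\leq 1$ and $\lambda,\lambda^{[k]}$ are identically distributed, it is bounded by $\mathbb{E}[(\lambda-\lambda^{[k]})^{2}]$. The resampling mechanism admits an ANOVA/Efron--Stein decomposition $\lambda=\sum_{S}\lambda_{S}$, and a direct computation gives
\[
\mathrm{Cov}(\lambda,\lambda^{[k]}) \;=\; \sum_{S\neq\emptyset}\|\lambda_{S}\|^{2}\,\mathbb{P}(S_{k}\cap S=\emptyset) \;\geq\; 0,
\]
so $\mathbb{E}[(\lambda-\lambda^{[k]})^{2}]=2\mathrm{Var}(\lambda)-2\mathrm{Cov}(\lambda,\lambda^{[k]})\leq 2\mathrm{Var}(\lambda)=O(N^{-1/3})$, invoking the known superconcentration estimate for the top eigenvalue of a Wigner matrix.

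The crux is a matching lower bound on the right-hand side of $(\star)$ of the form $\mathbb{E}[((v^{[k]})^{\top}\Delta v)^{2}]\gtrsim (k/N^{2})\,\mathbb{E}[\alpha^{2}]$. Heuristically, expanding the sum over the $k$ resampled pairs $(i,j)\in S_{k}$ and using delocalization of the top eigenvector together with $\|v\|=\|v^{[k]}\|=1$, a naive independence computation yields $\mathbb{E}[((v^{[k]})^{\top}\Delta v)^{2}]\approx (k/N^{2})(1+\mathbb{E}[\alpha^{2}])$, which would suffice. The obstruction, and where I expect the main work to lie, is that $v^{[k]}$ itself depends on the resampled entries $X'$, so the increments $X'_{i,j}-X_{i,j}$ comprising $\Delta$ cannot be treated as independent of the coefficients $(v^{[k]})_{i}v_{j}+(v^{[k]})_{j}v_{i}$ in the expansion. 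This is where the monotonicity lemma advertised in the introduction enters: building on Chatterjee's variance formula for functions of independent random variables, one decouples the dependence by an interpolation in which the resampled entries are introduced one at a time, the error of the decoupling being controlled again by the small variance of $\lambda$.

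Assuming such a lower bound, combining it with the upper bound on the left-hand side of $(\star)$ gives $(k/N^{2})\mathbb{E}[\alpha^{2}]\lesssim N^{-1/3}$, whence $\mathbb{E}[\alpha^{2}]\lesssim N^{5/3}/k = o(1)$ whenever $k/N^{5/3}\to\infty$. Cauchy--Schwarz then gives $\mathbb{E}|\alpha|\leq\sqrt{\mathbb{E}[\alpha^{2}]}=o(1)$, concluding the proof.
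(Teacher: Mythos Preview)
Your identity $(\star)$ is correct, but note that the two sides of $(\star)$ are literally the same quantity $A:=\EXP[\alpha^2(\lambda^{[k]}-\lambda)^2]$ written in two ways. Thus the plan ``bound the left-hand side from above and the right-hand side from below'' cannot produce information unless the lower bound on $A$ is proved by an argument that does \emph{not} simply unwind back to $A$. This is exactly where your sketch breaks.

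Your heuristic lower bound $\EXP\bigl[((v^{[k]})^\top\Delta v)^2\bigr]\approx (k/N^2)(1+\EXP[\alpha^2])$ is in fact inconsistent with your own upper bound: in the regime $k\gg N^{5/3}$ the term $k/N^2$ alone already exceeds $N^{-1/3}\asymp\var(\lambda)$, while you have just shown $A\le 2\var(\lambda)$. So the dependence of $v^{[k]}$ on $\Delta$ does not create a small error to be controlled---it wipes out the leading-order term entirely. There is no reason to expect that the piece proportional to $\EXP[\alpha^2]$ survives this cancellation while the constant piece does not; you would need to prove precisely that, and the monotonicity lemma does not do it. That lemma yields \emph{upper} bounds on correlations of one-coordinate increments, not lower bounds on $\EXP[((v^{[k]})^\top\Delta v)^2]$.

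The paper proceeds differently. It does not work with the $k$-step increment $\lambda^{[k]}-\lambda$ at all. Instead, the monotonicity lemma (Lemma~\ref{lem:monotonicity_second}) is applied to the \emph{single}-entry resampling to give
\[
\EXP\bigl[(\lambda-\mu)(\lambda^{[k]}-\mu^{[k]})\bigr]\;\le\;\frac{C\,\var(\lambda)}{k},
\]
where $\mu,\mu^{[k]}$ are the top eigenvalues after replacing one uniformly random entry $(s,t)$ by an independent copy. The substantial analytic work (delocalization, Lemma~\ref{delocalization}, and the $\ell^\infty$ eigenvector stability under a single resampling, Lemma~\ref{fliponeelem}, together with an auxiliary independent copy to decouple) then shows that $(\lambda-\mu)(\lambda^{[k]}-\mu^{[k]})$ is, up to controllable errors, $V_{s,t}\,v_s v_t v^{[k]}_s v^{[k]}_t$ with $V_{s,t}$ independent of the eigenvectors and $\EXP V_{s,t}=2$. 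Averaging over $(s,t)$ turns the left-hand side into $\sim N^{-2}\EXP[\langle v,v^{[k]}\rangle^2]$, yielding $\EXP[\langle v,v^{[k]}\rangle^2]\lesssim N^2\var(\lambda)/k\lesssim N^{5/3}/k$. The monotonicity lemma thus enters as an \emph{upper} bound on a correlation that is shown, by a separate perturbative argument, to be a proxy for $N^{-2}\EXP[\alpha^2]$---the opposite direction from what your outline requires.
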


Conversely, our second result asserts that when $k \leq N^{5/3 -o(1)}$ then $v$ and $v^{[k]}$ are almost aligned.
\begin{theorem}
\label{thm:main2}
Assume that $X$ is a Wigner matrix as above. There exists a constant $c >0$ such that, with $\eps_N =( \log N)^{-c \log \log N}$, 
$$
\EXP  \max_{1 \leq k \leq \eps_N N^{5/3} } \min_{s \in \{-1,1\}} \| v - s v^{[k]} \|_{2} = o(1)~.
$$
\end{theorem}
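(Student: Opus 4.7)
My plan is to track $v^{[k]}$ via a first-order perturbation argument along the linear interpolation $X(t) = X + t E$ (with $E = X^{[k]}-X$ supported on the random set $S_k$), estimating the integrated squared speed of the top unit eigenvector using eigenvector delocalization and eigenvalue rigidity for Wigner matrices. Choose the sign of $v^{[k]}$ so that $\inr{v, v^{[k]}} \geq 0$; then $\min_{s \in \{\pm 1\}}\|v - s v^{[k]}\|_2^2 = \|v - v^{[k]}\|_2^2$, so it suffices to show $\EXP \max_{1 \leq k \leq \eps_N N^{5/3}} \|v-v^{[k]}\|_2^2 = o(1)$.

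Fix $k$ for the moment. With overwhelming probability the top eigenvalue of $X(t)$ is simple for every $t\in [0,1]$, so one may choose the top unit eigenvector $v(t)$ smoothly with $v(0)=v$ and $v(1)=v^{[k]}$. Standard first-order perturbation theory gives $\|v'(t)\|^2 = \sum_{i \geq 2}|\inr{v_i(t), E v(t)}|^2/(\lambda_1(t)-\lambda_i(t))^2$, and Cauchy--Schwarz yields $\|v^{[k]}-v\|^2 \leq \int_0^1 \|v'(t)\|^2 \,dt$. Two standard inputs are used to bound $\|v'(t)\|^2$: delocalization $\|v_i(t)\|_\infty^2 \leq (\log N)^C/N$ for all $i$, and edge rigidity $\sum_{i \geq 2}(\lambda_1(t)-\lambda_i(t))^{-2} \leq N^{1/3}(\log N)^{C}$; both hold uniformly in $t$ with overwhelming probability because $X(t)$ has independent entries with bounded variances, so Erd\H{o}s--Yau--Yin-type local laws apply with minor modifications.

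The key numerator estimate is $|\inr{v_i(t), E v(t)}|^2 \leq k(\log N)^C/N^2$ with high probability, uniformly in $i \geq 2$ and $t$. Conditional on $X$ and $S_k$, the quantity $\inr{v_i(t), E v(t)}$ is a sum over $(p,q) \in S_k$ whose conditional variance over $X'$ is controlled via delocalization by $O(k(\log N)^C/N^2)$; the conditional bias, which averages to zero under the randomness of $S_k$ by orthogonality of the eigenvectors $v(t)$ and $v_i(t)$ of the symmetric matrix $X(t)$, contributes a fluctuation of the same order. Combining with edge rigidity yields $\|v'(t)\|^2 \leq k(\log N)^{C'}/N^{5/3}$ uniformly in $t$, hence $\|v^{[k]}-v\|^2 \leq k(\log N)^{C'}/N^{5/3}$, which is $o(1)$ whenever $k \leq \eps_N N^{5/3}$ and $\eps_N = (\log N)^{-c\log\log N}$ with $c$ large enough.

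For the maximum over $k$, couple the family $(X^{[k]})_{k\leq K}$ with $K = \lfloor \eps_N N^{5/3}\rfloor$ by adding one fresh random coordinate to $S_k$ at each step; apply the pointwise bound at the $O(\log N)$ dyadic values $k \in \{2^j\}$ via a union bound; and interpolate between consecutive dyadic scales using a per-step version of the same estimate, absorbing polylogarithmic losses into $\eps_N$. \textbf{The main obstacle} is the uniform-in-$t$ verification of delocalization, rigidity, and the bias estimate for the interpolant $X(t)$, which is not itself a Wigner matrix; although the local-law machinery adapts to it, the orthogonality of $v(t)$ and $v_i(t)$ that forces the bias average to vanish is an identity inside $X(t)$, coupling the randomness of $X$, $X'$, and $S_k$, so the decoupling arguments must be carefully tracked along the path. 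An alternative route--closer to the one announced in the introduction--is to combine Chatterjee's variance formula with the paper's monotonicity lemma and the superconcentration $\var(\lambda) \sim N^{-1/3}$, which directly yields the threshold $k \ll N^{5/3}$ without invoking a continuous perturbation argument.
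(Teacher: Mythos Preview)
Your approach is genuinely different from the paper's: they never interpolate continuously but instead compare the resolvents $R(z)$ and $R^{[k]}(z)$ at $\eta\sim N^{-1/6}L^{-c_1}$ via a discrete martingale expansion over the $k$ resampled entries (Lemma~\ref{lem:resresk}), then read off eigenvector closeness from the imaginary parts of the diagonal resolvent entries (Lemma~\ref{lem:reslk}). The resolvent route sidesteps exactly the dependence problems your interpolation creates.

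There is a real gap in your numerator bound $|\inr{v_i(t),E\,v(t)}|^2\le k(\log N)^C/N^2$. You propose to condition on $X$ and $S_k$ and exploit the randomness of $X'$, but $v(t)$ and $v_i(t)$ are eigenvectors of $X(t)=X+tE$, which already depends on $X'$ through $E$; there is no fresh independent randomness left to run a concentration argument on. Likewise, the ``bias'' term you hope to kill by averaging over $S_k$ involves $\inr{v_i(t),Xv(t)}$, and since $v(t)$ is an eigenvector of $X(t)$ rather than of $X$, orthogonality gives $\inr{v_i(t),Xv(t)}=-t\,\inr{v_i(t),Ev(t)}$, which is circular rather than zero. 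You flag this as ``the main obstacle'' but do not resolve it, and it is precisely the issue the paper's martingale-on-resolvent argument is designed to handle: by expanding $R^{[t+1]}-R^{[t]}$ via the resolvent identity, each increment is controlled by \emph{entrywise} resolvent bounds (Lemma~\ref{loclaw}) that do not require decoupling eigenvectors from the perturbation.

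A second gap is the uniform-in-$t$ control of $\sum_{i\ge 2}(\lambda_1(t)-\lambda_i(t))^{-2}$. This requires a lower bound on the top gap $\lambda_1(t)-\lambda_2(t)$ \emph{for every $t\in[0,1]$}, not just at the endpoints; nothing rules out a near-crossing somewhere along the path, and the local-law machinery gives rigidity at each fixed $t$ only with high probability, which does not survive a supremum over a continuum without further argument. Finally, your closing sentence misidentifies the role of Chatterjee's variance formula and the monotonicity lemma: in this paper those tools prove Theorem~\ref{thm:main} (sensitivity for $k\gg N^{5/3}$), not the stability direction you are asked to establish.
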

The proof of Theorem \ref{thm:main2} actually establishes that $\max_k \min _s \sqrt N \| v - s v^{[k]} \|_{\infty}$ goes to $0$ in probability. 

The following heuristic argument may provide an intuition of why the
threshold  in the lower bound of Theorem \ref{thm:main2} is at $k = N^{5/3 - o(1)}$. 
Since the seminal work of Erd\H{o}s, Schlein, and Yau \cite{ESY09b},
it is well known that unit eigenvectors of random matrices are
delocalized in the sense that $\| v \|_\infty = N^{-1/2 + o(1)}$ with
high probability. 
  Denoting the top eigenvalue of $X^{[k]}$ by $\lambda^{[k]}$,
we might infer from the derivative of a simple eigenvalue as the function of the matrix entries that
$$
\lambda^{[1]} - \lambda \simeq ( 1+ \IND (i_1 \ne j_1 ))  v_{i_1}( X'_{i_1,j_1} -  X_{i_1,j_1} )v_{j_1} \simeq  \frac{ X'_{i_1,j_1} -  X_{i_1,j_1} }{N^{1+o(1)}}~,
$$
where $v_i$ is the $i$-th component of $v$ 
Assuming that $v_i$ is nearly independent of any matrix entry $X_{ij}$, since $X_{ij}$ is centered with unit variance, we would get from the central limit theorem that
$$
\lambda^{[k]} - \lambda = \sum_{t =0}^{k-1}( \lambda^{[t+1]} - \lambda ^{[t]} )  \simeq \frac{\sqrt k}{N^{1+o(1)}}~ .
$$
On the other hand, the known behavior of random matrices at the edge
of the spectrum implies that the second largest eigenvalue of $X$ is at distance of order $N^{-1/6}$ from $\lambda$. The above heuristic should thus break down when $\sqrt k / N^{1 + o(1)}$ is of order $N^{-1/6}$. It gives the threshold at $k = N^{5/3+o(1)}$.

To get an idea of how Theorem \ref{thm:main} is proved,
consider the variance of the largest eigenvalue $\lambda$ of $X$.
The key inequality we prove is that
\[
 \left(\EXP \left|\inr{v,v^{[k]}}\right|\right)^2 \lesssim \frac{N^2\var(\lambda)}{k}.
\]
By the 
Tracy-Widom law \cite{tracy1994level, tracy1996orthogonal} for the
largest eigenvalue, we expect that $\var(\lambda)$ is of order $N^{-1/3}$, 
which implies the desired asymptotic orthogonality whenever $k/N^{5/3}\to \infty$.
The proof of the inequality above is based on a variance formula for
general functions of independent random variables due to Chatterjee
\cite{Cha05}, see Lemma \ref{lem:chatterjee} below. The variance
formula suggests that small variance implies noise sensitivity of the
top eigenvalue in a certain sense. This is made precise by Lemmas \ref{lem:monotonicity}
and \ref{lem:monotonicity_second}. Finally, noise sensitivity of the
top eigenvalue translates to the inequality above.

\remark
We expect that the arguments of Theorem \ref{thm:main} for the noise sensitivity of the top
  eigenvalue may be modified to prove analogous results for the
  eigenvector corresponding to the $j$-th
largest eigenvalue, $1 \leq j \leq N$. However, the threshold is expected to occur at
values different from $N^{5/3}$. In particular, a simple heuristic
argument suggests that for the $j$-th eigenvector the threshold occurs
around $N^{5/3+o(1)} \min ( j , N -j +1)^{-2/3}$. However, to keep the 
presentation transparent, in this paper we focus on the top eigenvalue.

Interestingly, the proof that the top eigenvalue is very
sensitive to resampling more than $\Theta(N^{5/3})$ entries 
involves proving that it is insensitive to resampling just a single
entry. As a consequence the proofs of Theorems \ref{thm:main} and
\ref{thm:main2} share common techniques.

The rest of the paper is dedicated to proving Theorems \ref{thm:main}
and \ref{thm:main2}. In Section \ref{sec:var} we introduce a general
tool for proving noise sensitivity that generalizes Chatterjee's ideas 
based of ``superconcentration'' to functions of independent, not
necessarily standard normal random variables.
In Section \ref{sec:rm} we summarize some of the tools from random
matrix theory that are crucial for our arguments.
In Sections \ref{sec:thm1} and \ref{sec:thm2} we give the proofs of 
Theorems \ref{thm:main}
and \ref{thm:main2}.

\section{Variance and noise sensitivity} 
\label{sec:var}

The first building block in the proof of Theorem \ref{thm:main} is a formula for the variance of an arbitrary
function of independent random variables, due to Chatterjee
\cite{Cha05}. For any positive integer $i$, denote  $[i]=\{1, \ldots, i\}$. 

\begin{lemma}
\label{lem:chatterjee}
\emph{\cite{Cha05}}
Let $X_1,\ldots,X_n$ be independent random variables taking values in
some set $\X$
and let $f:\X^n \to \R$ be a measurable function.
Denote $X=(X_1,\ldots,X_n)$. Let $X'=(X_1',\ldots,X_n')$ be an independent copy of $X$.
Under the notation
\[
   X^{(i)}=(X_1,\ldots,X_{i-1},X_i',X_{i+1},\ldots,X_n)
\quad \text{and} \quad
   X^{[i]}=(X_1',\ldots,X_i',X_{i+1},\ldots,X_n)
\]
and, in particular, $X^{[0]}=X$ and $X^{[n]}=X'$, we have
\[
\var(f(X)) = \frac{1}{2} \sum_{i=1}^n \EXP \left[\left(f(X)-f(X^{(i)})\right) \left(f(X^{[i-1]})-f(X^{[i]})\right)\right]~.
\]
\end{lemma}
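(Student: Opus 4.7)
The plan is to combine a telescoping sum with two exchangeability arguments, exploiting the fact that each $X_i$ and its independent copy $X_i'$ can be swapped without changing the joint law of $(X_1,\ldots,X_n, X_1',\ldots,X_n')$.

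The starting point is that $X^{[n]} = X'$ is an independent copy of $X^{[0]} = X$, so
\[
\var(f(X)) = \mathrm{Cov}\!\left(f(X^{[0]}) - f(X^{[n]}),\, f(X^{[0]})\right) = \sum_{i=1}^n \mathrm{Cov}\!\left(f(X^{[i-1]}) - f(X^{[i]}),\, f(X^{[0]})\right),
\]
where the second equality uses the telescoping identity $f(X^{[0]}) - f(X^{[n]}) = \sum_i (f(X^{[i-1]}) - f(X^{[i]}))$. Since $X^{[i-1]}$ and $X^{[i]}$ have the same marginal distribution, each covariance equals the plain expectation $\E[(f(X^{[i-1]}) - f(X^{[i]})) f(X^{[0]})]$.

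The main step is to rewrite each summand in the symmetric form appearing in the lemma. For fixed $i$, I would apply two coordinate swaps that preserve the joint law of the $2n$ variables. First, simultaneously swapping $X_j$ with $X_j'$ for every $j \in \{1,\ldots,i-1\}$ sends $X^{[0]}, X^{[i-1]}, X^{[i]}$ to $X^{[i-1]}, X, X^{(i)}$ respectively, which yields
\[
\E[(f(X^{[i-1]}) - f(X^{[i]})) f(X^{[0]})] = \E[(f(X) - f(X^{(i)})) f(X^{[i-1]})].
\]
Second, the swap $X_i \leftrightarrow X_i'$ sends $X \to X^{(i)}$ and $X^{[i-1]} \to X^{[i]}$, so
\[
\E[(f(X) - f(X^{(i)})) f(X^{[i-1]})] = -\E[(f(X) - f(X^{(i)})) f(X^{[i]})].
\]
Averaging the two representations of the left-hand side gives
\[
\E[(f(X^{[i-1]}) - f(X^{[i]})) f(X^{[0]})] = \frac{1}{2}\, \E[(f(X) - f(X^{(i)}))(f(X^{[i-1]}) - f(X^{[i]}))],
\]
and summing over $i$ produces the claimed formula.

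The main obstacle is purely bookkeeping: one must carefully verify how each of the four random vectors $X$, $X^{(i)}$, $X^{[i-1]}$, $X^{[i]}$ transforms under the two coordinate swaps and then invoke the distributional invariance of the joint law to equate the expectations. No analytic estimates or probabilistic tools beyond exchangeability are needed.
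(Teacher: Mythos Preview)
Your proof is correct. The telescoping step, the two exchangeability swaps, and the averaging are all valid, and the bookkeeping you flag as the only potential obstacle is handled accurately (in particular, the swap $X_j \leftrightarrow X_j'$ for $j<i$ indeed maps the triple $(X^{[0]},X^{[i-1]},X^{[i]})$ to $(X^{[i-1]},X,X^{(i)})$, and the swap $X_i\leftrightarrow X_i'$ sends $(X,X^{(i)},X^{[i-1]})$ to $(X^{(i)},X,X^{[i]})$).

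There is nothing to compare against in the paper itself: Lemma~\ref{lem:chatterjee} is stated with a citation to Chatterjee \cite{Cha05} and is not re-proved. Your argument is the standard one and matches what one finds in Chatterjee's original treatment.
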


In general, for $A \subseteq [n]$ let $X^A$ denote the random vector, obtained from $X$ by replacing the components indexed by $A$ by corresponding components of $X^{\prime}$.

In the variance formula above, the order of the variables does not
matter and the formula remains valid after permuting the indices
$1,\ldots,n$ arbitrarily. In particular, one may take the variables in 
random order. Thus, if $\sigma=(\sigma(1),\ldots,\sigma(n))$ is a random permutation sampled
uniformly from the symmetric group $S_n$ and $\sigma([i])$ denotes
$\{\sigma(1), \ldots, \sigma(i)\}$,
 then 
\begin{equation}
\label{eq:var}
\var(f(X)) = \frac{1}{2} \sum_{i=1}^n \EXP \left[\left(f(X)-f(X^{{(\sigma(i))}})\right) \left(f(X^{\sigma([i-1])})-f(X^{\sigma([i])})\right)\right]~.
\end{equation}
Note that on the right-hand side of \eqref{eq:var} the expectation is taken
with respect to both $X,X'$, and the random permutation $\sigma$.

One would intuitively expect that the terms on the right-hand side of
\eqref{eq:var} decrease with $i$, as the differences
$f(X)-f(X^{  (\sigma(i))} )$ and $f(X^{\sigma([i-1])})-f(X^{\sigma([i])})$
become less correlated as more randomly chosen components get
resampled. This is indeed the case and this fact is one of our main
tools in proving noise sensitivity. We believe that the following
lemma can be useful in diverse situations. The proof is given in
Section \ref{sec:prooflemmon} below.

\begin{lemma}
\label{lem:monotonicity}
Consider the setup of Lemma \ref{lem:chatterjee} and the notation
above. For $i\in [n]$, denote
\[
   B_i= \EXP \left[\left(f(X)-f(X^{{(\sigma(i))}})\right) \left(f(X^{\sigma([i-1])})-f(X^{\sigma([i])})\right)\right]~,
\]
where the expectation is taken with respect to components of vectors and random permutations. Then $B_i \ge B_{i+1}$ for all $i=1,\ldots,n-1$ and $B_n \ge 0$. In particular, for any $k\in [n]$,
\[
    B_k \le \frac{2\var(f(X))}{k}.
\]
\end{lemma}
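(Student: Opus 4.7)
My plan is to use the Hoeffding (Efron--Stein) decomposition of $f$. Write $f(X) = \sum_{S \subseteq [n]} f_S(X_S)$, where each summand $f_S$ depends only on $(X_i)_{i \in S}$ and is centered with respect to every proper subset of $S$, so that $\sum_{S \ne \emptyset} \EXP[f_S^2] = \var(f(X))$. For independent copies $X, X'$ and any $\eta, \eta' \in \{0,1\}^n$, setting $X^\eta_k := X_k$ when $\eta_k = 0$ and $X^\eta_k := X'_k$ otherwise, centering combined with independence gives the extended orthogonality
$$
\EXP\bigl[f_S(X^\eta_S)\, f_T(X^{\eta'}_T)\bigr] \;=\; \EXP[f_S^2]\,\IND\{S = T \text{ and } \eta|_S = \eta'|_S\};
$$
any index at which $\eta, \eta'$ disagree (or any index in the symmetric difference of $S$ and $T$) introduces an independent variable along which one factor is mean-zero.

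With this identity in hand, write $g(A) := f(X^A)$ and expand the four quantities $g(\emptyset), g(\{j\}), g(A), g(A \cup \{j\})$. Only components with $j \in S$ contribute to both differences $g(\emptyset) - g(\{j\})$ and $g(A) - g(A \cup \{j\})$. Applying the orthogonality identity to the four configurations $\eta = \mathbf{0}, \IND_{\{j\}}, \IND_A, \IND_{A \cup \{j\}}$ (understood as indicator vectors in $\{0,1\}^n$), two of the four cross-terms vanish because $j \in S$ while $j \notin A$, and collecting the surviving contributions yields the clean identity
$$
\EXP\bigl[(g(\emptyset) - g(\{j\}))(g(A) - g(A \cup \{j\}))\bigr] \;=\; 2 \!\!\sum_{\substack{S \ni j \\ S \cap A = \emptyset}}\!\! \EXP[f_S^2].
$$

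Taking the expectation over the uniform random permutation $\sigma$ with $A = \sigma([i-1])$ and $j = \sigma(i)$, and using that $\PROB(\sigma(i) \in S,\ \sigma([i-1]) \cap S = \emptyset)$ depends on $S$ only through $|S|$, one obtains
$$
B_i \;=\; 2 \sum_{S \ne \emptyset} \EXP[f_S^2]\, p_i(|S|), \qquad p_i(s) \;:=\; \frac{s\,(n-s)(n-s-1) \cdots (n-s-i+2)}{n(n-1) \cdots (n-i+1)}.
$$
Nonnegativity $B_n \ge 0$ is immediate since $p_i(s) \ge 0$. For monotonicity, a direct computation gives $p_{i+1}(s)/p_i(s) = (n-s-i+1)/(n-i) \le 1$ for every $s \ge 1$, so $B_i \ge B_{i+1}$. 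The required bound then follows from monotonicity and the variance formula \eqref{eq:var}:
$$
k\, B_k \;\le\; \sum_{i=1}^k B_i \;\le\; \sum_{i=1}^n B_i \;=\; 2\,\var(f(X)).
$$

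The only delicate point is the case analysis producing the clean identity for the expectation of the product of differences; once that is carried out, the rest is routine combinatorics on uniform random permutations. As a consistency check, $\sum_{i=1}^n p_i(s) = 1$ for every $s \ge 1$ (the first index of $\sigma$ landing in $S$ must occur somewhere), which, substituted above, recovers the standard Efron--Stein identity $\var(f) = \sum_{S \ne \emptyset} \EXP[f_S^2]$.
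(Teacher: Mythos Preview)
Your proof is correct and takes a genuinely different route from the paper's. The paper proceeds without the Hoeffding decomposition: it first proves an auxiliary inequality (Lemma~\ref{varianceformula}) showing, for a \emph{fixed} permutation $\sigma$ and any $j\notin\sigma([i])$, that
\[
\EXP\bigl[(f(X)-f(X^{(\sigma(i))}))(f(X^{\sigma([i-1])})-f(X^{\sigma([i])}))\bigr]
\;\ge\;
\EXP\bigl[(f(X)-f(X^{(\sigma(i))}))(f(X^{\sigma([i-1])\cup j})-f(X^{\sigma([i])\cup j}))\bigr]
\;\ge\;0,
\]
by conditioning on the unchanged variables and applying Cauchy--Schwarz; it then converts this into $B_i\ge B_{i+1}$ via the bijection on $S_n$ that swaps $\sigma(i)$ and $\sigma(i+1)$. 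Your approach instead produces the closed form $B_i=2\sum_{S\ne\emptyset}\EXP[f_S^2]\,p_i(|S|)$ and reduces the monotonicity to the elementary ratio $p_{i+1}(s)/p_i(s)=(n-s-i+1)/(n-i)\le 1$. The trade-off: the paper's argument is slightly more bare-handed (only conditional expectations and Cauchy--Schwarz, no ANOVA machinery), while your argument is more informative, yielding an explicit expression for each $B_i$ that immediately gives both nonnegativity and monotonicity, and recovers the variance identity as the consistency check $\sum_i p_i(s)=1$. Both approaches implicitly require $f\in L^2$, which is already needed for the variance formula to make sense.
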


We also introduce a modification of Lemma \ref{lem:monotonicity} that
will be more convenient for our purposes. To do so, we introduce the
following notation. Let $j$ have uniform distribution on $[n]$.  Let
$X^{(j) \circ \sigma([i-1])}$ denote the vector obtained  from
$X^{\sigma([i-1])}$ by replacing its $j$-th component by an
independent copy of the random variable $X_j$, denoted by
$X_j^{\prime\prime}$. Observe that $j$ may belong to $\sigma([i - 1])$
and in this case $X_j^{\prime\prime}$ is independent of $X_j^{\prime}$
appearing in $X^{\sigma([i-1])}$. With this notation in mind we may
prove the following version of Lemma \ref{lem:monotonicity}.

\begin{lemma}
\label{lem:monotonicity_second}
Using the notation of Lemma \ref{lem:monotonicity}, assuming that $j$ is chosen uniformly at random from the set $[n]$ and independently of other random variables involved, we have for any $k\in [n]$,
\[
    B_k^{\prime} \le \frac{2\var(f(X))}{k}\left(\frac{n + 1}{n}\right)~,
\]
where for any $i \in [n]$,
\[
B_i^{\prime} = \EXP \left[\left(f(X)-f(X^{(j)})\right) \left(f(X^{\sigma([i-1])})-f(X^{(j) \circ \sigma([i-1])})\right)\right]~.
\]
\end{lemma}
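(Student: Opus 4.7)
The strategy is to establish an exact identity relating $B_k'$ to $B_k$ and $B_{k-1}$ of Lemma \ref{lem:monotonicity}, and then invoke that lemma. Specifically, I aim to prove
\[
B_k' \;=\; \frac{(n-k+1)\,B_k - (k-1)\,B_{k-1}}{2n} \qquad (\text{with the convention } B_0 := 0).
\]
Once this is in hand, Lemma \ref{lem:monotonicity} gives $B_{k-1}\ge 0$ (all $B_i$ are nonnegative, being a monotone nonincreasing sequence with $B_n\ge 0$) and $B_k \le 2\var(f(X))/k$, whence
\[
B_k' \;\le\; \frac{(n-k+1)\,B_k}{2n} \;\le\; \frac{n+1}{2n}\cdot\frac{2\var(f(X))}{k} \;\le\; \frac{2\var(f(X))}{k}\cdot\frac{n+1}{n},
\]
which is the claim.

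To prove the identity I would split $B_k' = \frac{n-k+1}{n}\,D_A + \frac{k-1}{n}\,D_B$ according to the complementary events $\{j\notin\sigma([k-1])\}$ (Case A) and $\{j\in\sigma([k-1])\}$ (Case B). In Case A the $j$-th coordinate of $X^{\sigma([k-1])}$ still equals $X_j$, and the two copies $X_j'$ (appearing in $X^{(j)}$) and $X_j''$ (appearing in $X^{(j)\circ\sigma([k-1])}$) are independent of each other and of all remaining variables. Integrating them out conditionally factorizes the two factors, and a subsequent integration over $X_j$ yields
\[
D_A \;=\; \EXP\!\left[\mathrm{Cov}_z\!\bigl(f(X_{-j},z),\,f(Y_{-j},z)\bigr)\right],
\]
where $Y=X^{\sigma([k-1])}$ and $z$ is an independent copy of $X_j$. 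The same manipulation applied to $B_k$, in which the two factors \emph{share} the single copy $X_{\sigma(k)}'$, produces \emph{twice} the same covariance expression via the elementary identity $\EXP[(\xi-\xi')(\eta-\eta')]=2\mathrm{Cov}(\xi,\eta)$ for iid copies; and since the conditional distribution of $(\sigma([k-1]),j)$ given $j\notin\sigma([k-1])$ coincides, by symmetry, with the unconditional distribution of $(\sigma([k-1]),\sigma(k))$, one concludes $D_A = B_k/2$. In Case B, where $Y_j=X_j'$ is now shared with $X^{(j)}$ while $X_j''$ is still independent of everything else, a similar but slightly longer expansion yields $D_B=-\EXP[\mathrm{Cov}_z(\cdots)]$, and matching the conditional distribution of $(j,\sigma([k-1])\setminus\{j\})$ with $(\sigma(k-1),\sigma([k-2]))$ identifies this quantity as $-B_{k-1}/2$. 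Summing gives the stated identity.

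The main obstacle is the bookkeeping in Case B: because $X_j'$ enters \emph{both} factors of $B_k'$ (directly in $X^{(j)}$ and also in $X^{\sigma([k-1])}$ since $j\in\sigma([k-1])$), the two factors are not conditionally independent given all other randomness, and the Case A factorization trick does not apply. The remedy is to integrate out the fresh copy $X_j''$ first; what remains is an expression in $X_j'$ that, after a brief expansion, collapses into the negative of the very same covariance quantity appearing in Case A. This sign change is what produces $D_B=-B_{k-1}/2$, and combined with $B_{k-1}\ge 0$ from Lemma \ref{lem:monotonicity} it is exactly what lets us simply drop that term in the final bound.
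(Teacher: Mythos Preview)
Your case split by $\{j\notin\sigma([k-1])\}$ versus $\{j\in\sigma([k-1])\}$ is exactly the paper's approach, and your distributional identifications (matching $(j,\sigma([k-1]))$ conditioned on Case~A with $(\sigma(k),\sigma([k-1]))$, and $(j,\sigma([k-1])\setminus\{j\})$ conditioned on Case~B with $(\sigma(k-1),\sigma([k-2]))$) are correct. The problem is your reading of $X^{(j)}$.

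You take $X^{(j)}$ to carry $X_j'$ in coordinate $j$ (the same prime used inside $X^{\sigma([k-1])}$), following the convention of Lemma~\ref{lem:chatterjee}. But in this lemma the intended meaning---made explicit in the paper's own proof, where it writes ``$X^{(1)}=(X_1'',X_2,\ldots,X_n)$'', and required for the application in \eqref{eq:firststep}, where both perturbations use $X''$---is that $X^{(j)}$ uses the \emph{same} fresh copy $X_j''$ as $X^{(j)\circ\sigma([k-1])}$. With that reading your Case~A computation changes: both factors replace $X_j$ by the \emph{same} $X_j''$, so the product is $\EXP[(g(a)-g(c))(h(a)-h(c))]=2\,\mathrm{Cov}(g,h)$, giving $D_A=B_k$, not $B_k/2$. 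In Case~B the two factors become $(g(X_j)-g(X_j''))\,(h(X_j')-h(X_j''))$, which expands to $+\mathrm{Cov}(g,h)=B_{k-1}/2$, not $-B_{k-1}/2$. Thus your exact identity $B_k'=\frac{(n-k+1)B_k-(k-1)B_{k-1}}{2n}$ is false for the quantity the lemma is actually about; the correct identity is
\[
B_k'\;=\;\frac{n-k+1}{n}\,B_k\;+\;\frac{k-1}{2n}\,B_{k-1}.
\]

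The paper does not compute $D_B$ exactly; it only proves $D_B\le B_{k-1}$ by writing $D_B=B_{k-1}-(\text{something}\ge 0)$ and checking the sign of that something via Jensen, arriving at $B_k'\le\frac{n-k+1}{n}B_k+\frac{k-1}{n}B_{k-1}$ and then invoking Lemma~\ref{lem:monotonicity}. Your covariance computation, once done with the correct copy $X_j''$, gives the same conclusion a bit more cleanly. In short: right architecture, but the sign and the factor of two in your identity both stem from misidentifying which copy sits inside $X^{(j)}$.
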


\section{Random matrix results} 
\label{sec:rm}

In the proof of Theorem \ref{thm:main} we apply Lemma
\ref{lem:monotonicity_second} with $f$ being the top eigenvalue of a
Wigner matrix. The usefulness of this bound crucially hinges on the
fact that the variance of the
top eigenvalue is small, that is, in a sense, the top eigenvalue is
``superconcentrated''.
This fact is quantified in this section.

Our first lemma on the variance of $\lambda$ is obtained as  a combination of
a result of Ledoux and Rider \cite{MR2678393}  on Gaussian ensembles
and the universality of fluctuations for Wigner
matrices as stated in Erd\H{o}s, Yau and Yin \cite{MR2871147}.

\begin{lemma}
\label{lem:variance}
Assume that $X$ is a Wigner matrix as in Theorem \ref{thm:main}. Let $\lambda$ denote the largest eigenvalue of $X$. Then,  
\[
    \var(\lambda) \le (c  +o(1)) N^{-1/3}~,
\]
where $c>0$ is an absolute constant. 

\end{lemma}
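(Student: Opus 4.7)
The plan is to reduce the problem to the Gaussian case and then transfer the variance bound using universality. Concretely, I would first establish the estimate $\var(\lambda) \le (c+o(1)) N^{-1/3}$ when $X$ is a GOE matrix (or GUE, depending on whether the entries are real or complex), and then invoke the universality of edge statistics to extend it to arbitrary Wigner matrices satisfying the sub-exponential tail hypothesis of Theorem \ref{thm:main}.

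For the first step, recall that for a Wigner matrix normalized as in the statement, the top eigenvalue lies near $2\sqrt{N}$ and fluctuates on the scale $N^{-1/6}$. The Ledoux--Rider small-deviations estimates for beta ensembles \cite{MR2678393} give, in the Gaussian case, non-asymptotic tail bounds of the form
\[
\PROB\bigl(\lambda_G - 2\sqrt{N} \ge N^{-1/6} t\bigr) \le C e^{-c t^{3/2}} \quad \text{and} \quad \PROB\bigl(\lambda_G - 2\sqrt{N} \le - N^{-1/6} t\bigr) \le C e^{-c t^{3}}
\]
for all $t \ge 0$ and all $N$. These bounds are uniform in $N$ and decay fast enough to integrate the second moment of $(\lambda_G - \EXP \lambda_G) N^{1/6}$, yielding $\var(\lambda_G) \le c_1 N^{-1/3}$ with an explicit constant whose leading term is determined by the variance of the Tracy--Widom distribution $F_1$ (or $F_2$).

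For the second step, the universality of the edge statistics for Wigner matrices, as established in \cite{MR2871147}, implies that for bounded continuous functions $\varphi$ one has $\EXP \varphi\bigl(N^{1/6}(\lambda - 2\sqrt{N})\bigr) - \EXP \varphi\bigl(N^{1/6}(\lambda_G - 2\sqrt{N})\bigr) = o(1)$ as $N \to \infty$, and moreover analogous rigidity / tail bounds for $\lambda$ are now known (by Erd\H{o}s--Yau--Yin, Pillai--Yin and related works) under the sub-exponential tail assumption of Theorem \ref{thm:main}. Combining these gives uniform integrability of $N^{1/3}(\lambda - \EXP \lambda)^2$, which together with the Gaussian bound of the previous paragraph yields $\var(\lambda) \le (c + o(1)) N^{-1/3}$ for the general Wigner case.

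The main obstacle will be handling the transfer from the Gaussian case to the general case at the level of second moments rather than at the level of distributions. Universality statements are often phrased for bounded test functions, whereas $x \mapsto x^2$ is unbounded, so some extra care is required: one either needs a Green's function comparison argument for the second moment of the rescaled eigenvalue, or, more conveniently, a uniform-in-$N$ sub-exponential tail bound for $N^{1/6}(\lambda - 2\sqrt{N})$ that matches the Ledoux--Rider one. Once such a uniform tail bound is in hand (and this is exactly what the rigidity results accompanying \cite{MR2871147} provide under our moment assumption), variance convergence follows by dominated convergence, and the lemma is proved with $c$ equal to, for example, twice the variance of the Tracy--Widom distribution $F_1$.
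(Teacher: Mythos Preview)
Your high-level strategy matches the paper's: Ledoux--Rider \cite{MR2678393} gives $\var(\lambda_G)\le cN^{-1/3}$ for GOE, and then edge universality \cite{MR2871147} transfers this to Wigner matrices. You also correctly flag the unbounded-test-function issue and how rigidity resolves it (the paper in fact cites \cite[Theorem 2.4]{MR2871147} and \cite[Theorem 1.6]{MR3034787} for a statement that yields $N^{1/3}|\var(\lambda_G)-\var(\lambda)|=o(1)$ directly, so that step is less of an obstacle than you suggest).

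What you miss is a second, more serious obstacle that consumes most of the paper's proof. The universality comparison in \cite{MR2871147} is stated for Wigner matrices whose second-moment profile matches the Gaussian ensemble, in particular $\E X_{ii}^2=2$. The lemma, however, allows an arbitrary diagonal variance $\sigma_0^2\ge 0$. So your invocation of \cite{MR2871147} is only justified for what the paper calls \emph{standard} Wigner matrices; for general $\sigma_0^2$ an additional reduction is needed. The paper accomplishes this by proving
\[
\E|\lambda-\lambda_0|^2 \le N^{-\kappa}\quad\text{for any }\kappa<1/2,
\]
where $\lambda_0$ is the largest eigenvalue of the matrix obtained from $X$ by zeroing the diagonal. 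One direction, $\E(\lambda_0-\lambda)_+^2\le (\log N)^C/N$, is easy via delocalization. The reverse direction is the hard part: since $X-X_0$ can have operator norm much larger than $1$ and full rank, a naive perturbation argument fails. The paper instead compares the resolvents $R(z)=(X-zI)^{-1}$ and $R_0(z)=(X_0-zI)^{-1}$ near the edge (their Lemmas on local law and on $|R_{ii}-{(R_0)}_{ii}|$), and reads off the eigenvalue gap from the imaginary part of the resolvent. This resolvent comparison is the real content of the proof and is entirely absent from your outline.
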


{
\begin{remark}
The result of Lemma \ref{lem:variance} implies an improved version of the variance bound  
\[
\var(\lambda) \lesssim (\log N)^{C\log \log N}N^{-1/3},
\] 
following from \cite[Theorem 2.2]{MR2871147}. 
\end{remark}
}

We also need the following delocalization result of the top eigenvector of a Wigner matrix which can be found in Tao and Vu \cite[Proposition 1.12]{MR2669449}.

\begin{lemma}
\label{delocalization}{\em \cite{MR2669449}.}
Assume that $X$ is a Wigner matrix as in Theorem \ref{thm:main}. For
any real $c_0>0$, there exists a constant $C>0$, such that, 
with probability at least $1-C N^{-c_0}$,  any eigenvector $w$ of $X$ with $\| w \|_2  =1 $ satisfies
\[
\|w\|_{\infty} \le \frac{(\log N)^C}{\sqrt{N}}~.
\]

\end{lemma}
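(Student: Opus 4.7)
\textbf{Proof plan for Lemma \ref{delocalization}.} The plan is to follow the standard resolvent-based approach to eigenvector delocalization, which reduces the problem to proving a local semicircle law for the diagonal entries of the Green function $G(z)=(X-zI)^{-1}$. The starting identity is the spectral decomposition
\[
   \Im G_{ii}(E+i\eta) = \sum_{\alpha=1}^N \frac{\eta \, |u_\alpha(i)|^2}{(E-\lambda_\alpha)^2+\eta^2},
\]
where $(\lambda_\alpha,u_\alpha)$ runs over the eigenpairs of $X$. Picking $E=\lambda_\alpha$ and any $\eta>0$ gives the pointwise bound
\[
   |u_\alpha(i)|^2 \le \eta \,\Im G_{ii}(\lambda_\alpha+i\eta).
\]
So it suffices to prove that, on a high-probability event, $|G_{ii}(z)| \lesssim 1$ uniformly over $z = E+i\eta$ with $|E|\le 3$ and $\eta = (\log N)^{C_1}/N$, for a sufficiently large $C_1=C_1(c_0)$. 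Choosing $\eta$ of this order, taking a union bound over $i\in [N]$ and over a fine enough net in $E$ (together with the $O(1)$-Lipschitz property of $G_{ii}$ on that scale), then yields $\|u_\alpha\|_\infty^2 \le (\log N)^{2C}/N$ simultaneously for all $\alpha$, which is the claim.

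The key technical step is thus proving the local semicircle law, i.e.\ that $G_{ii}(z)$ is close to $m_{sc}(z)$, the Stieltjes transform of the semicircle law, on scales $\eta \gtrsim (\log N)^{C}/N$. The standard route is via the Schur complement formula
\[
   \frac{1}{G_{ii}(z)} = X_{ii} - z - \sum_{k,\ell\ne i} X_{ik}\, G^{(i)}_{k\ell}(z)\, X_{i\ell},
\]
where $G^{(i)}$ is the resolvent of the $(N-1)\times (N-1)$ minor obtained by deleting row and column $i$. Independence of the $i$-th row from $G^{(i)}$, plus Hanson--Wright type concentration (valid under the sub-exponential tail assumption $\E\exp(|X_{ij}|^\delta)\le 1/\delta$), shows that the quadratic form concentrates around $\tfrac{1}{N}\Tr G^{(i)}(z)$. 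Interlacing then replaces $\tfrac{1}{N}\Tr G^{(i)}$ by the empirical Stieltjes transform $m_N(z)=\tfrac{1}{N}\Tr G(z)$, up to error $O(1/(N\eta))$. This produces an approximate self-consistent equation
\[
   m_N(z) \approx \frac{1}{-z - m_N(z)},
\]
whose only stable solution in the upper half-plane is $m_{sc}(z)$. Stability of this fixed-point equation, combined with a standard continuity/bootstrap argument in $\eta$ starting from the easy regime $\eta\sim 1$ and descending to $\eta\sim (\log N)^{C}/N$, yields both $m_N(z)\approx m_{sc}(z)$ and $G_{ii}(z)\approx m_{sc}(z)$, hence boundedness of $G_{ii}$.

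The main obstacle is the bootstrap itself: the concentration estimates for the quadratic form deteriorate as $\eta$ decreases, so one has to propagate the a priori bound on $\Im m_N$ obtained at scale $\eta$ down to scale $\eta/2$, while keeping track of a polylogarithmic safety factor that accommodates the failure probability $N^{-c_0}$ via Markov's inequality applied to high moments of the quadratic form. The sub-exponential moment hypothesis of the lemma is precisely what is needed to run such a high-moment estimate, and is responsible for the $(\log N)^C$ factor in the final delocalization bound (a sub-Gaussian assumption would allow $C$ to be made explicit and small). Once the local law is in place, the deduction of $\|w\|_\infty\le (\log N)^C/\sqrt N$ reduces to the short calculation sketched above, so essentially all the work is in the local semicircle law.
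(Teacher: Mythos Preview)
The paper does not actually prove Lemma~\ref{delocalization}; it is quoted as \cite[Proposition~1.12]{MR2669449} and used as a black box throughout. So there is no ``paper's own proof'' to compare against.

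That said, your sketch is the correct and standard route (essentially the Erd\H{o}s--Schlein--Yau argument, which is also what underlies the Tao--Vu reference): reduce delocalization to a pointwise bound on $\Im G_{ii}$ at scale $\eta\sim (\log N)^C/N$, and obtain that bound via the local semicircle law proved through the Schur complement self-consistent equation plus a continuity/bootstrap in $\eta$. One small caveat: in this paper's normalization the entries of $X$ have unit variance with no $1/\sqrt{N}$ scaling, so the spectrum lives near $[-2\sqrt{N},2\sqrt{N}]$ rather than $[-2,2]$. Your ranges ``$|E|\le 3$'' and ``$\eta\sim(\log N)^{C_1}/N$'' correspond to the normalized matrix $Y=X/\sqrt{N}$; for $G(z)=(X-zI)^{-1}$ as written you would want $|E|\lesssim\sqrt{N}$ and $\eta\sim(\log N)^{C_1}/\sqrt{N}$, with the target bound $|G_{ii}|\lesssim N^{-1/2}$ (cf.\ Lemma~\ref{loclaw} in the paper, which does exactly this rescaling). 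This is purely cosmetic and does not affect the argument.
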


Our final lemma is a perturbation inequality in $\ell^\infty$-norm of
the top eigenvector of a Wigner matrix when a single entry is
re-sampled. The proof uses precise estimates on the eigenvalue
spacings in Wigner matrices proved in Tao and Vu
\cite{MR2669449} and Erd\H{o}s, Yau, and Yin \cite{MR2871147}.

\begin{lemma}
\label{fliponeelem}
 Let $X$ be a Wigner matrix as in Theorem \ref{thm:main} and $X'$ { be} an independent copy of $X$. For any $(i,j)$ with $1\le i,j \le N$. Denote by {$X^{(ij)}$} the symmetric matrix obtained from $X$ by replacing the entry $X_{ij}$ by $X'_{ij}$ and $X_{ji}$ by $X'_{ji}$.  For any $0 < \alpha < 1/10$, there exists $\kappa >0$ such that, for all $N$ large enough, with probability at least $1 - N^{-\kappa}$,
\[
\max_{1 \leq i,j \leq N} \inf_{s \in \{-1,1\}} \|s v - u^{(ij)} \|_\infty \le  N^{ - \frac 1 2 - \alpha}~,
\]
where $v$ and $u^{(ij)}$ are any unit eigenvectors corresponding to the largest eigenvalues of $X$ and {$X^{(ij)}$}.
\end{lemma}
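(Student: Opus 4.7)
\emph{Proof plan.} My approach is first-order spectral perturbation in the eigenbasis of $X$, controlled by delocalization and eigenvalue rigidity. Write $X^{(ij)} = X + \Delta$ with $\Delta = c(e_i e_j^T + e_j e_i^T)$ when $i \ne j$ and $\Delta = c\, e_i e_i^T$ when $i = j$, where $c = X'_{ij} - X_{ij}$. Let $v = v_1, v_2, \ldots, v_N$ be an orthonormal eigenbasis of $X$ with eigenvalues $\lambda = \lambda_1 > \lambda_2 \geq \cdots \geq \lambda_N$, and write $u = u^{(ij)}$ with corresponding top eigenvalue $\mu = \lambda^{(ij)}$. Decompose $u = \alpha v + \sum_{k \geq 2} \beta_k v_k$ with $\alpha = \inr{v, u}$ and $\beta_k = \inr{v_k, u}$. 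The goal is to bound $\sum_{k \geq 2} \beta_k v_k$ in $\ell^\infty$ and show that $|\alpha|$ is close to one; then $s = \sgn(\alpha)$ will give the desired sign.

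I would first work on a high-probability event on which (i) $\max_{i,j}(|X_{ij}| + |X'_{ij}|) \leq (\log N)^{C_0}$, from the subexponential tail assumption and a union bound; (ii) Lemma \ref{delocalization}, applied both to $X$ and to each $X^{(ij)}$, yields $\|v_k\|_\infty, \|u\|_\infty \leq (\log N)^{C_0}/\sqrt{N}$ for all eigenvectors; and (iii) the eigenvalue rigidity and level-repulsion estimates of \cite{MR2669449, MR2871147} give $\lambda - \lambda_k \geq c_0 (k-1)^{2/3} N^{-1/6 - \eta}$ for every $k \geq 2$ and any preset $\eta > 0$. Each of these events individually fails with probability $\leq N^{-C_1}$ for arbitrarily large $C_1$, so a union bound over the $O(N^2)$ pairs $(i,j)$ still leaves a global event of probability $\geq 1 - N^{-\kappa}$.

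The core computation starts by projecting $X^{(ij)} u = \mu u$ onto $v_k$ to obtain
\[
\beta_k = \frac{\inr{v_k, \Delta u}}{\mu - \lambda_k}, \qquad k \geq 2.
\]
A pair of Rayleigh-quotient comparisons gives $\inr{v, \Delta v} \leq \mu - \lambda \leq \inr{u, \Delta u}$, hence $|\mu - \lambda| \leq 2|c| \max(\|v\|_\infty^2, \|u\|_\infty^2) \lesssim (\log N)^{3C_0}/N$, which is asymptotically much smaller than the smallest relevant gap $\lambda - \lambda_2 \gtrsim N^{-1/6 - \eta}$; hence $|\mu - \lambda_k| \geq (\lambda - \lambda_k)/2$. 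On the numerator side, $|\inr{v_k, \Delta u}| \leq 2|c| \|v_k\|_\infty \|u\|_\infty \lesssim (\log N)^{3C_0}/N$. Combining this with Cauchy-Schwarz, using $\sum_k v_k(\ell)^2 = 1$, and the rigidity estimate $\sum_{k \geq 2}(\lambda - \lambda_k)^{-2} \lesssim N^{1/3 + 2\eta}$ yields
\[
\Bigl| \sum_{k \geq 2} \beta_k v_k(\ell) \Bigr| \lesssim \frac{(\log N)^{3C_0}}{N} \Bigl( \sum_{k \geq 2} \frac{1}{(\lambda - \lambda_k)^2} \Bigr)^{1/2} \lesssim (\log N)^{3C_0} N^{-5/6 + \eta}.
\]

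For the contribution of the $\alpha$ term, $1 - \alpha^2 = \sum_{k \geq 2} \beta_k^2 \leq \|\Delta u\|^2/(\lambda - \lambda_2)^2 \lesssim (\log N)^{4C_0} N^{-2/3 + 2\eta}$, so $1 - |\alpha|$ is of the same order and the entrywise contribution $(1 - |\alpha|)\|v\|_\infty \lesssim (\log N)^{5C_0} N^{-7/6 + 2\eta}$ is dominated by the main term. Assembling everything gives $\|s v - u^{(ij)}\|_\infty \lesssim (\log N)^{C_2} N^{-5/6 + \eta}$, which for $\eta$ small and $N$ large is at most $N^{-1/2 - \alpha}$ for any $\alpha < 1/3$, in particular for $\alpha < 1/10$ as claimed. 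The main obstacle is quoting rigidity and level-repulsion estimates that cover all eigenvalues $\lambda_k$ simultaneously with failure probability small enough to survive the union bound over $O(N^2)$ pairs; a secondary subtlety is that the perturbation expansion is justified not by smallness of $\|\Delta\|$ (which is of order one) but by the smallness of $\|\Delta u\|$ on delocalized eigenvectors, combined with the spectral gap.
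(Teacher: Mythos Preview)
Your approach is close in spirit to the paper's---both do first-order eigenvector perturbation controlled by delocalization and eigenvalue rigidity/spacing---but there is one substantive difference that deserves attention.

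The crux is your assumption (iii): you claim that, with probability at least $1 - N^{-\kappa}$, one has $\lambda - \lambda_k \geq c_0 (k-1)^{2/3} N^{-1/6-\eta}$ for \emph{every} $k\ge 2$. For $k$ larger than a polylogarithmic threshold this is indeed standard rigidity from \cite{MR2871147}, but for $k=2$ it is the edge-gap statement $\lambda - \lambda_2 \geq c_0 N^{-1/6-\eta}$ with polynomial failure probability. This is a genuine level-repulsion estimate at the edge, considerably sharper than what the paper quotes from Tao--Vu \cite[Theorem 1.14]{MR2669449}, namely $\lambda - \lambda_2 > N^{-1/2-\rho}$. Your single-scale Cauchy--Schwarz argument collapses without the sharp gap: with only $\lambda-\lambda_2 \gtrsim N^{-1/2-\rho}$, the term $k=2$ alone contributes $N^{1+2\rho}$ to $\sum_{k\ge 2}(\lambda-\lambda_k)^{-2}$ and you end up with $\|sv - u\|_\infty \lesssim N^{-1/2+\rho}$, which misses the target $N^{-1/2-\alpha}$. (Note also that this gap event depends only on $X$, not on $(i,j)$, so the union-bound concern you flag is not the real obstacle here.)

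The paper's proof is engineered precisely around this weaker gap input. It splits $u^{(ij)} = \alpha v + \beta x + \gamma y$ with $x$ in the span of $v_2,\ldots,v_q$ (where $q = \lfloor N^\theta\rfloor$, $\theta = 2/5 - 3\rho/5$) and $y$ in the span of $v_{q+1},\ldots,v_N$. The high-mode piece $\gamma$ is controlled by rigidity for $k>q$, while the low-mode piece $\beta$ uses only the crude gap $N^{-1/2-\rho}$ but exploits that $x$ is a combination of merely $q$ delocalized eigenvectors, so $\|x\|_\infty \leq L\sqrt{q/N}$. Optimizing $\theta$ yields $\|sv - u^{(ij)}\|_\infty = O(L^4 N^{-3/5 + 8\rho/5})$, enough for any $\alpha < 1/10$.

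In short: your route is cleaner and, if the sharp edge level-repulsion bound can be cited, gives a stronger exponent ($N^{-5/6+}$ versus $N^{-3/5+}$); the paper's two-scale decomposition buys robustness to the weaker, more readily quotable gap estimate. If you want to keep your argument, you need to locate an explicit reference giving $\PROB(\lambda-\lambda_2 < N^{-1/6-\eta}) \le N^{-\kappa}$ for Wigner matrices under the stated tail assumptions; otherwise, the paper's splitting is the necessary fix.
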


\section{Proof of Theorem \ref{thm:main}}
\label{sec:thm1}

Now we are ready for the proof of the main results of the paper.

We start by fixing some notation. 
Let $\lambda$ denote the largest eigenvalue of the Wigner matrix $X$ of Theorem \ref{thm:main} and let $v\in S^{N-1}$ be a corresponding normalized eigenvector.
Let $k\in \left[\binom{N}{2} + N\right]$ to be specified later and let $X^{[k]}$ be the 
random symmetric matrix obtained by resampling $k$ random
entries { above} the diagonal and including the diagonal, as defined in the introduction. We denote
by $S_k \subset  \left[\binom{N}{2} + N\right]$ the set of random positions of the $k$
resampled entries. Let $\lambda^{[k]}$ denote the top eigenvalue of
$X^{[k]}$ and $v^{[k]}$ a corresponding normalized eigenvector.

For $1 \leq i \leq j \leq N$, we denote by {$Y_{(ij)}$}  the symmetric matrix obtained from $X$ by replacing
the entry $X_{ij}$ by $X^{\prime\prime}_{ij}$ where $X^{\prime\prime}$ is an independent copy of $X$. We obtain $Y^{[k]}_{(ij)}$ from $X^{[k]}$
by the same operation. We denote by $(\mu_{(ij)},u_{(ij)})$, and $(\mu^{[k]}_{(ij)},u^{[k]}_{(ij)})$ the
top eigenvalue/eigenvector pairs of $Y_{(ij)}$ and $Y^{[k]}_{(ij)}$, respectively. Let $(s, t)$ be a pair of {indices chosen uniformly at random from $\left[\binom{N}{2} + N\right]$ and satisfying $1\leq s \leq t\leq N$}.
For ease of notation, we set $Y = Y_{(st)}$, $\mu = \mu_{(st)} $ and $u = u_{(st)}$. We define similarly $Y^{[k]} = Y^{[k]}_{(st)}$, $\mu^{[k]} =  \mu^{[k]}_{(st)}$ and $u^{[k]} = u^{[k]}_{(st)}$.

By applying Lemma \ref{lem:monotonicity_second} to
the function of $n=\binom{N}{2} + N$ independent random variables 
$f\left((X_{i,j})_{1\le i\le j\le N}\right)=\lambda$, we obtain that, for
any $k\in \left[\binom{N}{2} + N\right]$, 
\begin{equation}
\label{eq:firststep}
\frac{2\var(\lambda)}{k} \cdot \frac{\binom{N}{2} + N + 1}{\binom{N}{2} + N } \ge \EXP\left[
      (\lambda - \mu)\left(\lambda^{[k]}-\mu^{[k]}\right) \right]~.
\end{equation}
{In what follows, we show that the right-hand side of \eqref{eq:firststep} satisfies
\[
\EXP\left[(\lambda - \mu)\left(\lambda^{[k]}-\mu^{[k]}\right) \right] \simeq \frac{1}{N^2}\E\left[\langle v, v^{[k]}\rangle^2\right].
\]
This relation, combined with Lemma \ref{lem:variance} and \eqref{eq:firststep}, implies
\[
\E\left[\langle v, v^{[k]}\rangle^2\right] \lesssim \frac{N^{\frac{5}{3}}}{k}~,
\]
which is sufficient for Theorem \ref{thm:main}. We proceed with the formal argument.
}
Using the notation of the previous section we have
\[
\EXP\left[(\lambda - \mu)\left(\lambda^{[k]}-\mu^{[k]}\right) \right] = \EXP\left[(\inr{v,Xv} - \inr{u,Yu})\left(\inr{v^{[k]},X^{[k]}v^{[k]}} - \inr{u^{[k]},Y^{[k]}u^{[k]}}\right)\right]~. 
\]
{ Using the fact that $v$ maximizes $\inr{v,Xv}$ and $u$ maximizes $\inr{u,Yu}$ we have 
\[
\inr{u,(X - Y)u} \le \inr{v,Xv} - \inr{u,Yu} \le \inr{v,(X - Y)v}.
\]
Observe that the elements of $X - Y$ are all zeros except at most two that correspond to resampled values. If the element $X_{t, s}$ of $X$ was resampled to get $Y$, we have, for any vector $x$,
\[
\inr{x,(X - Y)x} = U_{t, s} x_{t}x_{s}
\]
with  $U_{t,s} = (X_{t,s} - X''_{t,s}) ( 1 + \ind(t \ne s))$. Similarly, if we set $U'_{t,s} = (X'_{t,s} - X''_{t,s}) ( 1 + \ind(t \ne s))$, we have $\inr{x,(X^{[k]} - Y^{[k]})x} = U'_{t, s} x_{t}x_{s}$. Therefore, it is straightforward to see that
\begin{align*}
&(\inr{v,Xv} - \inr{u,Yu})\left(\inr{v^{[k]},X^{[k]}v^{[k]}} - \inr{u^{[k]},Y^{[k]}u^{[k]}}\right)\ge I ,
\end{align*}
where we have set,
$$
I = V_{t,s} \min\left\{v_t v_s v^{[k]}_t v^{[k]}_s, u_t u_s v^{[k]}_t v^{[k]}_s, v_t v_s u^{[k]}_t u^{[k]}_s, u_t u_s u^{[k]}_t u^{[k]}_s\right\},
$$
and for $1 \leq i \leq j \leq N$, 
$$
V_{i,j} = U_{i,j} U'_{i,j} = ( 1 + \ind(i \ne j))^2  (X_{i,j} - X''_{i,j})   (X'_{i,j} - X''_{i,j}).
$$
 In order to have some extra independence, we introduce yet another independent copy of our random variables. For $1 \leq i \leq j \leq N$, let $Z_{(ij)}$ be the symmetric matrix obtained from $X$ by replacing
the entry $X_{ij}$ by $X'''_{ij}$ where $X'''$ is an independent copy of $X$, independent of $X'$ and $X''$. We obtain $Z^{[k]}_{(ij)}$ from $X^{[k]}$
by the same operation. As above, we denote by $w_{(ij)}$, and $w^{[k]}_{(ij)}$ the
top unit eigenvector of $Z_{(ij)}$ and $Z^{[k]}_{(ij)}$, respectively. For ease of notation, with $(s,t)$ as above, we define $w = w_{(s,t)}$ and $w^{[k]} = w^{[k]}_{(st)}$. The key observation is that $V_{i,j}$ is independent of $Z_{(ij)}$ and $Z^{[k]}_{(ij)}$.}

Fix $0 < \alpha< 1/10$  and let $C$ be as in Lemma
\ref{delocalization} for $c_0 = 10$. We define {$\cE= \cE_1 \cap \cE_2$} 
to be the {intersection} of the following two events: 
\begin{itemize}
{\item $\cE_1$: for all $1 \leq i \leq j \leq N$: $\max(\|v - w_{(ij)} \|_{\infty} , \|u_{(ij)} - w_{(ij)} \|_{\infty} , \|v^{[k]} - w_{(ij)}^{[k]}\|_{\infty} , \|u_{(ij)}^{[k]} - w_{(ij)}^{[k]}\|_{\infty}  ) \le N^{-\frac 1 2 - \alpha}$.
\item $\cE_2$: $\|x\|_{\infty} \le \frac{(\log N)^C}{\sqrt{N}}$ for all $x \in \left\{v, u_{(ij)}, w_{(ij)}, v^{[k]}, u_{(ij)}^{[k]},  w_{(ij)}^{[k]} : 1 \leq i, j \leq N\right\}$.}
\end{itemize}
By Lemmas \ref{delocalization}, \ref{fliponeelem}, and the union bound, we have,  for all $N$ large enough, $\PROB(\cE_2^c) \leq N^{-6}$ and  for some $\kappa >0$, $\PROB(\cE^c) \le N^{-\kappa}$ (provided that we choose properly the $\pm$-phase for the eigenvectors $u$, $w$, $u^{[k]}$ and $w^{[k]}$). Observe that when $\cE$ holds, for all 
\begin{equation}\label{eq:xyinf}
x \in \{v_t v_s v^{[k]}_t v^{[k]}_s, u_t u_s v^{[k]}_t v^{[k]}_s, v_t v_s u^{[k]}_t u^{[k]}_s, u_t u_s u^{[k]}_t u^{[k]}_s  \}~,
\end{equation} 
we have, for all $N$ large enough, 
$$|x - w_t w_s w^{[k]}_t w^{[k]}_s| \le \frac{4(\log N)^{3C}}{N^{2 + \alpha}}~.$$ 
We show this, for brevity, only for $v_t v_s v^{[k]}_t v^{[k]}_s$. 
Denoting $\delta_t = w_t-v_t$ and $\delta_t^{[k]}=v_t^{[k]}-w_t^{[k]}$, 
we write 
\[
v_t v_s v^{[k]}_t v^{[k]}_s = (w_t -\delta_t)(w_s - \delta_s)(w^{[k]}_t - \delta_t^{[k]})(w^{[k]}_s - \delta_s^{[k]})~.
\]
Then open the brackets and use that, on $\cE$,
\[
\max\{|\delta_t|, |\delta_s|, |\delta_t^{[k]}|, |\delta_s^{[k]}|\} \le  N^{-\frac{1}{2} - \alpha}\quad \text{and}\quad \max\{|w_t|, |w_s|, |w^{[k]}_t|, |w^{[k]}_s|\} \le (\log N)^{{ 3}C} / \sqrt{N}~.
\] 
{ If $\cE$ holds, we thus have 
\begin{align*}
& I \ge  V_{t,s} w_t w_s w^{[k]}_t w^{[k]}_s   - \frac{4(\log N)^{3C}}{N^{2 + \alpha}}  |V_{t,s}| . 
\end{align*}
On the other hand, if $\cE_2 \backslash \cE$ holds, we get
$$
I \geq - \frac{(\log N)^{4C}}{N^2}\ind (\cE^c) |V_{t,s}|.
$$
Finally, if $\cE_2$ does not hold, using that all the
vectors are of unit norm (and therefore, $\max\{|v_t|, |v_s|, |v^{[k]}_t|, |v^{[k]}_s|\} \le 1$), we have
\begin{align*}
&  I \geq  - \ind (\cE_2^c) |V_{t,s}|~.
\end{align*}
The same bounds hold for $ V_{t,s} w_t w_s w^{[k]}_t w^{[k]}_s $ on $\cE_2 \backslash \cE$ and $\cE_2^c$. Note also that $\E V_{t,s}^2 \leq c^2_1$ for some constant $c_1 \geq 1$ depending
on $\delta$.  Combining altogether the last three bounds, by the Cauchy-Schwarz inequality, we arrive at 
$$
\E [ I ] \geq \E [ V_{t,s} w_t w_s w^{[k]}_t w^{[k]}_s ] - 4c_1\frac{(\log N)^{3C}}{N^{2 + \alpha}} - 2c_1 \frac{(\log N)^{4C}}{N^{2}} \sqrt{\PROB(\cE^c)} - 2c_1 \sqrt{\PROB(\cE_2^c)}.
$$
Recalling \eqref{eq:firststep}, we find
\begin{align*}
\E [ V_{t,s} w_t w_s w^{[k]}_t w^{[k]}_s ] &\le \frac{4\var(\lambda)}{k}+ 4c_1\frac{(\log N)^{3C}}{N^{2 + \alpha}} + 2c_1 \frac{(\log N)^{4C}}{N^{2}} \sqrt{\PROB(\cE^c)} + 2c_1 \sqrt{\PROB(\cE_2^c)}~.
\end{align*}
Integrating over the random choice of $(s,t)$, we have
\begin{equation}
\label{decomp}
\E [ V_{t,s} w_t w_s w^{[k]}_t w^{[k]}_s ]= \frac{1}{\binom{N}{2} + N}\E \left( \sum\limits_{1 \le i \le j \le N}V_{i,j} (w_{(ij)})_i (w_{(ij)})_j (w^{[k]}_{(ij)})_i (w^{[k]}_{(ij)})_j \right).
\end{equation}
Now, using \eqref{decomp} and using $\frac{\binom{N}{2} + N + 1}{\binom{N}{2} + N } \le 2$, we get 
\begin{equation}
\label{eq:Z1}
\E \left(\sum\limits_{1 \leq i , j \leq N} \tilde V_{i,j}(w_{(ij)})_i (w_{(ij)})_j (w^{[k]}_{(ij)})_i (w^{[k]}_{(ij)})_j \right) \leq 4 N^2 \frac{\var(\lambda)}{k}  + \eps_N ,
\end{equation}
where $\tilde V_{i,j} = V_{i,j} / 2$ if $i \ne j$, $\tilde V_{i,i} = V_{i,i}$ and 
$$
\eps_N =  4c_1\frac{(\log N)^{3C}}{N^{ \alpha}} + 2c_1 (\log N)^{4C} \sqrt{\PROB(\cE^c) } + 2c_1 N^2\sqrt{\PROB(\cE_2^c)}.
$$ Note that for $i \ne j$, $\E \tilde V_{i,j} = 2$ and $\E \tilde V_{i,i} = \sigma_0^2$. We have
\begin{equation*}
\E\left(\sum\limits_{i=1}^N  (w_{(ij)})_i (w_{(ij)})_j (w^{[k]}_{(ij)})_i (w^{[k]}_{(ij)})_j \right) \le \frac{ (\log N) ^{4C} }{N} + N \PROB(\cE_2^c).
\end{equation*}
Hence,  using that the variable $V_{i,j}$ is independent of the vectors $w_{(ij)},w_{(ij)}^{[k]}$, we deduce that 
\begin{eqnarray}
2 \E \left(\sum\limits_{1 \leq i , j \leq N}  (w_{(ij)})_i (w_{(ij)})_j (w^{[k]}_{(ij)})_i (w^{[k]}_{(ij)})_j \right) &\leq & 4 N^2  \frac{\var(\lambda)}{k}  + \eps'_N ,\label{eq:Z2}
\end{eqnarray}
where 
$$
\eps'_N = \eps_N + | 2 - \sigma_0^2|\frac{ (\log N) ^{4C} }{N} + N | 2 - \sigma_0^2| \PROB(\cE_2^c).
$$
We now argue that in \eqref{eq:Z2}, we may replace the vectors $w_{(ij)}$ and $w_{(ij)}^{[k]}$ by $v$ and $v^{[k]}$ respectively. We repeat the above argument. Recall the event $\cE = \cE_1 \cap \cE_2$ defined above. As already pointed, on the event $\cE$, we have 
$$
|v_i v_j v_i^{[k]} v_j ^{[k]}  - (w_{(ij)})_i (w_{(ij)})_j (w^{[k]}_{(ij)})_i (w^{[k]}_{(ij)})_j| \le \frac{4(\log N)^{3C}}{N^{2 + \alpha}}~. 
$$
If $\cE_2$ holds, we have 
$$
|v_i v_j v_i^{[k]} v_j ^{[k]}  - (w_{(ij)})_i (w_{(ij)})_j (w^{[k]}_{(ij)})_i (w^{[k]}_{(ij)})_j| \le \frac{2(\log N)^{4C}}{N^{2}}~.
$$
Finally, there is the deterministic bound 
$$
|v_i v_j v_i^{[k]} v_j ^{[k]}  - (w_{(ij)})_i (w_{(ij)})_j (w^{[k]}_{(ij)})_i (w^{[k]}_{(ij)})_j| \le 2.
$$
Combining the last three bounds we obtain that 
\begin{align*}
&\E \sum\limits_{1 \leq i , j \leq N}  | (w_{(ij)})_i (w_{(ij)})_j (w^{[k]}_{(ij)})_i (w^{[k]}_{(ij)})_j  - v_i v_j v^{[k]}_i v^{[k]}_j| 
\\
&\quad\leq \frac{4(\log N)^{3C}}{N^{\alpha}} + 2 (\log N)^{4C} \PROB(\cE^c) + 2N^2 \PROB(\cE_2^c).
\end{align*}
The right-hand side is upper bounded by $2 \eps_N$. We thus have proved that 
\begin{eqnarray}
2 \E \left(\sum\limits_{1 \leq i , j \leq N} v_i v_j v^{[k]}_i v^{[k]}_j \right) &\leq & 4 N^2  \frac{\var(\lambda)}{k}  + \eps''_N ,\label{eq:Z3}
\end{eqnarray}
with $\eps''_N =  \eps'_N +2 \eps_N$. As already pointed, by Lemmas \ref{delocalization}, \ref{fliponeelem}, and the union bound, we have, for all $N$ large enough,
$\PROB({\cE'_2}^c) \leq N^{-6}$ and $\PROB({\cE'}^c) \le N^{-\kappa}$. It follows that $\eps''_N\to 0$ with $N$.

Now, combining Jensen's inequality and \eqref{eq:Z3},  
\begin{align*}
\left(\E\left|\inr{v, v^{[k]}}\right|\right)^2 &\le \E\left(\sum\limits_{i = 1}^Nv_iv_i^{[k]}\right)^2
\le \E\left(\sum\limits_{1 \le i , j \le N}v_i v_j v^{[k]}_i v^{[k]}_j\right)
\le 2N^2  \frac{\var(\lambda)}{k}  + \frac{\eps''_N}{2}.
\end{align*}
From Lemma \ref{lem:variance}, the claim follows.}

\subsection{Proof of Lemma \ref{lem:monotonicity} and Lemma \ref{lem:monotonicity_second}}
\label{sec:prooflemmon}

We start with the following technical lemma. 
\begin{lemma}
\label{varianceformula}
Let $f: \X^n \to \R$ be a measurable function and let $\sigma \in S_n$
be any fixed permutation. Fix $i \in [n - 1]$ and $j \in [n]$ such that $j \notin \sigma([i])$. Let $X_1,\ldots,X_n$ be
independent random variables taking values in $\X$. Then
\begin{align*}
A_i &= \EXP \left[\left(f(X)-f(X^{(\sigma(i))})\right) \left(f(X^{\sigma([i - 1])})-f(X^{\sigma([i])})\right)\right] 
\\
&\ge \EXP \left[\left(f(X)-f(X^{(\sigma(i))})\right) \left(f(X^{\sigma([i - 1])\cup j})-f(X^{\sigma([i])\cup j})\right)\right]
\\
&\ge 0~.
\end{align*}
\end{lemma}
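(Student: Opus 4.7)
The plan is to unify the three factors $T := f(X) - f(X^{(m)})$, $U := f(X^{\sigma([i-1])}) - f(X^{\sigma([i])})$, and $U' := f(X^{\sigma([i-1]) \cup \{j\}}) - f(X^{\sigma([i]) \cup \{j\}})$ (with $m = \sigma(i)$ and $A = \sigma([i-1])$) under a common functional form, and then exploit the i.i.d.\ structure of the resampled coordinates to produce a perfect square for the non-negativity claim and a variance for the monotonicity claim.

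First I would introduce the auxiliary random variables $a = X_m$, $b = X'_m$, $p = X_j$, $q = X'_j$, $\xi = (X_k)_{k \in A}$, $\xi' = (X'_k)_{k \in A}$, and let $\mathcal{G}$ denote the $\sigma$-algebra generated by the remaining coordinates of $X$ and $X'$. Conditional on $\mathcal{G}$, these six groups are mutually independent with $a \stackrel{d}{=} b$, $\xi \stackrel{d}{=} \xi'$, $p \stackrel{d}{=} q$. I would then define the $\mathcal{G}$-measurable function $F(y; \eta, z)$ to be $f$ evaluated at the configuration whose position $m$ equals $y$, position $j$ equals $z$, positions in $A$ equal $\eta$, and remaining positions are fixed by $\mathcal{G}$. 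Unwinding the definitions of $X$, $X^{(m)}$, $X^A$, $X^{A \cup \{m\}}$, $X^{A \cup \{j\}}$, and $X^{A \cup \{m, j\}}$ (using $j \notin A \cup \{m\}$) yields the clean identities
\[
T = F(a; \xi, p) - F(b; \xi, p), \quad U = F(a; \xi', p) - F(b; \xi', p), \quad U' = F(a; \xi', q) - F(b; \xi', q).
\]

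For the non-negativity $\E[T U'] \geq 0$, conditional on $\mathcal{G}$ and $(a, b)$ the two factors $T$ and $U'$ depend on the independent, identically distributed pairs $(\xi, p)$ and $(\xi', q)$ respectively. Independence factorizes the expectation, and the common distribution collapses both factors to the same quantity: setting $\bar F(y) := \E_{\xi, p}[F(y; \xi, p)]$, this gives
\[
\E[T U' \mid \mathcal{G}, a, b] = (\bar F(a) - \bar F(b))^2 \geq 0,
\]
and the tower property closes the step.

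For the monotonicity $\E[T(U - U')] \geq 0$, I would condition on $\mathcal{G}$ together with $(a, b, p, q)$ and integrate out $\xi, \xi'$ first. Since $\xi \stackrel{d}{=} \xi'$, the averaged function $\bar F_1(y, z) := \E_\xi[F(y; \xi, z)]$ coincides whether one averages over $\xi$ or $\xi'$; combined with $\xi \perp \xi'$ this lets one factor the cross terms, and a short computation gives
\[
\E_{\xi, \xi'}[T(U - U') \mid \mathcal{G}, a, b, p, q] = \Delta(p)\bigl(\Delta(p) - \Delta(q)\bigr),
\]
where $\Delta(z) := \bar F_1(a, z) - \bar F_1(b, z)$. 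Averaging over the i.i.d.\ pair $(p, q)$ converts the right-hand side into $\E_p[\Delta(p)^2] - (\E_p \Delta(p))^2 = \mathrm{Var}_p(\Delta(p)) \geq 0$, and the tower property finishes the argument. The only subtlety I expect is the bookkeeping needed to read off the identities for $T, U, U'$ in the unified form $F$: one must carefully track whether $X$- or $X'$-values appear at positions in $A$ (the role of $\xi$ versus $\xi'$) and at position $j$ (the role of $p$ versus $q$).
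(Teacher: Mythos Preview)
Your proof is correct and follows essentially the same approach as the paper's: both condition on the uninvolved coordinates, integrate out the block $\xi$ (the paper's $B$), and reduce the monotonicity to the non-negativity of a conditional variance --- your $\mathrm{Var}_p(\Delta(p))$ is exactly the paper's $\E[h^2]-\E[h(\cdot,X_{i+1},\cdot)\,h(\cdot,X'_{i+1},\cdot)]$. One mild streamlining worth noting: for the final non-negativity you observe directly that $T$ and $U'$ depend on the i.i.d.\ blocks $(\xi,p)$ and $(\xi',q)$, so $\E[TU'\mid\mathcal{G},a,b]$ is a perfect square, whereas the paper instead iterates the monotonicity step down to $A_n$ and then checks $A_n\geq 0$ separately.
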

\begin{proof}
Without loss of generality, we may consider one particular permutation
$\sigma$, defined as follows: set $\sigma(k) = k$ for $k \notin \{1,
i\}$, 
$\sigma(i) = 1$, $\sigma(1) = i$, and we may also assume that $j = i +
1$. 
The proof is identical for any other $\sigma$ and $j$. In our case,
\[
A_i = \EXP \left[\left(f(X)-f(X^{(1)})\right) \left(f(X^{[i]\setminus\{1\}})-f(X^{[i]})\right)\right]~.
\]
Moreover, we have 
\[
\EXP \left[\left(f(X)-f(X^{(\sigma(i))})\right) \left(f(X^{\sigma([i - 1])\cup j})-f(X^{\sigma([i])\cup j})\right)\right] = A _{i + 1}.
\]
We introduce a simplifying notation. Denote $B = (X_2, \ldots,X_i)$, $B' = (X'_2, \ldots, X'_i)$ and $C = (X_{i + 2}, \ldots, X_n)$. Therefore, we may rewrite 
\[
A_i = \EXP \left[\left(f(X_1, B, X_{i + 1}, C)-f(X_1', B, X_{i + 1}, C)\right) \left(f(X_1, B', X_{i + 1}, C)-f(X_1', B', X_{i + 1}, C)\right)\right]
\] and
\[
A_{i + 1} = \EXP \left[\left(f(X_1, B, X_{i + 1}, C)-f(X_1', B, X_{i + 1}, C)\right) \left(f(X_1, B', X_{i + 1}', C)-f(X_1', B', X_{i + 1}', C)\right)\right]~.
\]
Denote $h(X_1, X_1', X_{i + 1}, C) = \EXP[\left(f(X_1, B, X_{i + 1}, C)-f(X_1', B, X_{i + 1}, C)\right)\big| X_1, X_1', X_{i + 1}, C]$. Using the independence of $B, B'$ and their independence of the remaining random variables, we have
\[
A_i = \EXP h(X_1, X_1', X_{i + 1}, C)^2~.
\]
At the same time, using the same notation for $h$ we have, by the
Cauchy-Shwarz inequality
and the fact that $X_{i + 1}$ and $X_{i + 1}'$ have the same distribution,
\begin{align*}
A_{i + 1} &= \EXP h(X_1, X_1', X_{i + 1}, C)h(X_1, X_1', X'_{i + 1},C)
\\
&=  \EXP[\EXP[ h(X_1, X_1', X_{i + 1}, C)h(X_1, X_1', X'_{i + 1},C)|X_1, X_1', C ]]
\\
&\le \EXP h(X_1, X_1', X_{i + 1}, C)^2
\\
&= A_{i}~.
\end{align*}
Now to prove that $A_{i} \ge 0$, it is sufficient to show that $A_n \ge 0$. Denoting $g(X_1) = \EXP [f(X)|\ X_1]$, we have 
\begin{align*}
A_{n} &= \EXP \left[\left(f(X)-f(X^{(1)})\right) \left(f(X^{[n]\setminus\{1\}})-f(X^{[n]})\right)\right]
\\
&= \EXP (f(X)f(X^{[n]\setminus\{1\}}) - f(X)f(X^{[n]}) - f(X^{(1)})f(X^{[n]\setminus\{1\}}) + f(X^{(1)})f(X^{[n]}))
\\
&= 2\EXP f(X)f(X^{[n]\setminus\{1\}}) - 2(\EXP f(X))^2
\\
&= 2\EXP[\EXP [f(X)f(X^{[n]\setminus\{1\}})| X_1]] - 2(\EXP f(X))^2
\\
&= 2\EXP[g(X_1)^2] - 2(\EXP f(X))^2
\\
&\ge 0~,
\end{align*}
where we used Jensen's inequality and that $\EXP g(X_1) = \EXP f(X)$.
\end{proof}

We proceed with the proof of Lemma \ref{lem:monotonicity}.

\begin{proof}
In this proof by writing $i + 1$ we mean $i + 1\ (\text{mod}\ n)$. For each permutation $\sigma \in S_n$ and fixed $i \in [n]$ we construct a corresponding permutation $\sigma'$ by defining $\sigma'(i) = \sigma(i + 1),\ \sigma'(i + 1) = \sigma(i)$ and $\sigma'(k) = \sigma(k)$ for $k \neq \{i, i + 1\}$. 

It is straightforward to see that for any fixed $i$ there is a one-to-one correspondence between $\sigma \in S_n$ and $\sigma'$. By observing that $\sigma'([i]) = \sigma([i - 1])\cup \sigma(i + 1)$ and $\sigma'([i +1]) = \sigma([i + 1])$ we have, conditionally on $\sigma$,
\begin{align*}
&\EXP \left[\left(f(X)-f(X^{(\sigma(i))})\right) \left(f(X^{\sigma([i - 1])})-f(X^{\sigma([i])})\right)\right]
\\
&=\EXP \left[\left(f(X)-f(X^{(\sigma'(i + 1))})\right) \left(f(X^{\sigma([i - 1])})-f(X^{\sigma([i])})\right)\right]
\\
&\ge \EXP \left[\left(f(X)-f(X^{(\sigma'(i + 1))})\right) \left(f(X^{\sigma'([i])})-f(X^{\sigma'([i+ 1])})\right)\right]~,
\end{align*}
where in the last step we used Lemma \ref{varianceformula}. Using the one to one correspondence between all $\sigma$ and $\sigma'$, we have
\begin{align*}
B_i &= \E_{\sigma}\EXP \left[\left(f(X)-f(X^{(\sigma(i))})\right) \left(f(X^{\sigma([i - 1])})-f(X^{\sigma([i])})\right)\right] 
\\
&= \frac{1}{n!}\sum\limits_{\sigma}\EXP \left[\left(f(X)-f(X^{(\sigma(i))})\right) \left(f(X^{\sigma([i - 1])})-f(X^{\sigma([i])})\right)\right]
\\
&\ge\frac{1}{n!}\sum\limits_{\sigma'}\EXP \left[\left(f(X)-f(X^{(\sigma'(i + 1))})\right) \left(f(X^{\sigma'([i])})-f(X^{\sigma'([i+ 1])})\right)\right]
\\
&=\E_{\sigma}\EXP \left[\left(f(X)-f(X^{(\sigma(i + 1))})\right) \left(f(X^{\sigma([i])})-f(X^{\sigma([i+ 1])})\right)\right] 
\\
&= B_{i + 1}.
\end{align*}
The proof that $B_n \ge 0$ follows from Lemma \ref{varianceformula} as well. 
\end{proof}

Finally, we prove Lemma \ref{lem:monotonicity_second}.

\begin{proof}
To prove this Lemma we show an upper bound for $B_i^{\prime}$. We have,
\begin{align*}
B_i^{\prime} &= \EXP \left[\left(f(X)-f(X^{(j)})\right) \left(f(X^{\sigma([i-1])})-f(X^{(j) \circ \sigma([i-1])})\right)\right]
\\
&=\E_{\sigma}\left(\EXP \left[\left(f(X)-f(X^{(j)})\right) \left(f(X^{\sigma([i-1])})-f(X^{(j) \circ \sigma([i-1])})\right)\big| j \in { \sigma[i - 1]}\right]\PROB(j \in { \sigma[i - 1]})\right)
\\
&\quad+\E_{\sigma}\left(\EXP \left[\left(f(X)-f(X^{(j)})\right) \left(f(X^{\sigma([i-1])})-f(X^{(j) \circ \sigma([i-1])})\right)\big| j \notin { \sigma[i - 1]}\right]\PROB(j \notin { \sigma[i - 1]})\right)~.
\end{align*}
Observe that $\PROB(j \in{ \sigma[i - 1]}) = \frac{i - 1}{n}$ and the second
summand is equal to $B_i\frac{n - i + 1}{n}$. We proceed with the
first summand. For $i \ge 1$, we have
\begin{align*}
&\E_{\sigma}\EXP \left[\left(f(X)-f(X^{(j)})\right) \left(f(X^{\sigma([i-1])})-f(X^{(j) \circ \sigma([i-1])})\right)\big| j \in { \sigma[i - 1]}\right]
\\
&= \E_{\sigma}\EXP \left[\left(f(X)-f(X^{(j)})\right) \left(f(X^{\sigma([i-1])\setminus\{j\}})-f(X^{(j) \circ \sigma([i-1])})\right)\big| j \in { \sigma[i - 1]}\right]
\\
&\quad+\E_{\sigma}\EXP \left[\left(f(X)-f(X^{(j)})\right) \left(f(X^{\sigma([i-1])})-f(X^{\sigma([i-1])\setminus\{j\}})\right)\big| j \in { \sigma[i - 1]}\right]
\\
& = B_{i - 1} - \E_{\sigma}\EXP \left[\left(f(X)-f(X^{(j)})\right) \left(f(X^{\sigma([i-1])\setminus\{j\}}) - f(X^{\sigma([i-1])})\right)\big| j \in { \sigma[i - 1]}\right]~.
\end{align*}
{ Finally, we prove that 
\begin{equation}
\label{diffeq}
\E_{\sigma}\EXP \left[\left(f(X)-f(X^{(j)})\right) \left(f(X^{\sigma([i-1])\setminus\{j\}}) - f(X^{\sigma([i-1])})\right)\big| j \in \sigma[i - 1]\right] \ge 0~.
\end{equation}
Without loss of generality, we consider a particular choice of $\sigma$ and $j$ such that $\sigma(k) = k$, for $k \in [n]$ and $j = 1$. Therefore, \eqref{diffeq} will follow from
\[
\E f(X)(f(X^{[i-1]\setminus\{1\}}) - f(X^{[i-1]})) \ge \E f(X^{(1)})(f(X^{[i-1]\setminus\{1\}}) - f(X^{[i-1]}))~.
\]
Since $X^{(1)} = (X_1^{\prime\prime}, X_2, \ldots, X_n)$, we have $\E f(X)f(X^{[i-1]}) = \E f(X^{(1)})f(X^{[i-1]})$. This implies that \eqref{diffeq} is valid whenever
\[
\E f(X)f(X^{[i-1]\setminus\{1\}}) \ge \E f(X^{(1)})f(X^{[i-1]\setminus\{1\}})~.
\]
As in the proof of Lemma \ref{varianceformula}, this relation holds due to Jensen's inequality. These lines together imply that
\[
B_i^{\prime} \le \frac{i - 1}{n}B_{i - 1} + \frac{n - i + 1}{n}B_i~,
\] 
which, using Lemma \ref{lem:monotonicity}, proves the claim.}
\end{proof}

\subsection{Proof of Lemma \ref{lem:variance}}

We start with a special case. Let us say that a Wigner matrix as in Theorem \ref{thm:main} is {\em standard} if for all $i$, $ \E X_{ii}^2= 2$. In this case, the variance of the entries of $X$ is equal to the variance of the entries of a random matrix $Y$ sampled from the Gaussian Orthogonal Ensemble (GOE). If $\mu$ is the largest eigenvalue of $Y$, it follows from \cite[Corollary 3]{MR2678393} that for some absolute constant $c >0$,
$$
\var ( \mu ) \leq c N^{-1/3}.
$$

On the other hand, it follows from \cite[Theorem 2.4]{MR2871147} (see also \cite[Theorem 1.6]{MR3034787} for a statement which can be used directly) that,
$$
N^{1/3} \left| \var ( \mu )  - \var (\lambda) \right| = o(1).
$$
We obtain the first claim of the lemma for standard Wigner matrices.  To conclude the proof of the lemma for Wigner matrices, it suffices to prove that for any Wigner matrix $X$, for some $\kappa \geq 1/3$, we have for all $N$ large enough,
\begin{equation} 
\label{eq:ll0}
\E |\lambda - \lambda_0 |^2 \leq  N^{-\kappa}.
\end{equation}
where $\lambda_0$ is the largest eigenvalue of a matrix $X_0$ obtained from $X$ by setting to $0$ all diagonal entries. We will prove it for any $\kappa < 1/2$ (an improvement of the forthcoming Lemma \ref{lem:resres0} would give \eqref{eq:ll0} for any $\kappa <1$). The proof requires some care since the operator norm of $X - X_0$ may be much larger than $1$ and the rank of $X-X_0$ could be $N$.

There is an easy inequality which is half of \eqref{eq:ll0}. Let $v_0$ be a unit eigenvector of $X_0$ with eigenvalue $\lambda_0$. We have 
$$
\lambda \geq \langle v_0 , X v_0 \rangle =  \langle v_0 , X_0 v_0 \rangle+ \langle v_0 , (X -X_0)v_0 \rangle = \lambda_0 + \sum_{i=1}^N (v_0)_i^2 X_{ii}~,
$$
where  $(v_0)_i$ is the $i$-th coordinate of $v_0$. We observe that
$v_0$ is independent of { $X_{ii}$ for all $i$ and $\E
  X_{ii} X_{jj} = 0$ for $i \neq j$. Denoting $(x)^2_+ = \max
  (x,0)^2$, by the Cauchy-Schwarz inequality,} we deduce that
$$
\E (\lambda_0 - \lambda)_+ ^2  \leq    \E \sum_{i=1}^N  (v_0)_i^4 \E X_{ii}^2 \leq \E \| v_0 \|^2_{\infty} \sigma_0^2~.
$$
We write,  $\E \| v_0 \|^2_{\infty} \leq (\log N)^ {2C}/ N +  \PROB( \|v_0 \|_{\infty} \geq (\log N)^ {C}/ \sqrt N) $. From Lemma \ref{delocalization} applied to $c_0 = 2$, we deduce that for some constant $C>0$,
$$
\E (\lambda_0 - \lambda)_+ ^2 \leq  \frac{(\log N) ^C}{N}~.
$$
It implies the easy half of \eqref{eq:ll0} for any $\kappa < 1$.

The proof of the converse inequality is more involved.  Fore ease of notation, we introduce the number for $N \geq 3$,
\begin{equation}\label{eq:defL}
L = L_N = (\log N)^{\log \log N}~.
\end{equation}
We say that a sequence of events $(A_N)$ holds {\em with overwhelming
  probability} if for any $C>0$, there exists a constant $c>0$ such
that $\PROB( A_N) \geq 1 - cN^{-C}$. We repeatedly use the fact that a polynomial { intersection} of events of overwhelming probability is an event of overwhelming probability. 
We start with a small deviation lemma which can be found, for example, in \cite[Appendix B]{MR2981427}.

\begin{lemma}\label{lem:dev}
Assume that $(Z_i)$ $1 \leq i \leq N$ are independent centered complex variables such that for some $\delta >0$, for all $i$, $\E \exp \left( |Z_i|^{\delta}\right) \leq 1/ \delta$. Then, for any $(x_i) \in \mathbb{C}^N$ 
with overwhelming probability,
$$
\left| \sum_{i=1}^N x_i Z_i \right| \leq L \| x\|_2~.
$$
\end{lemma}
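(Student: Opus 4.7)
The plan is to combine truncation with Hoeffding's inequality; since $L = (\log N)^{\log \log N}$ is much larger than any polylog of $N$, a crude version of such a tail estimate suffices. By homogeneity I may assume $\|x\|_2 = 1$, and by splitting real and imaginary parts of both $x_i$ and $Z_i$ it suffices (up to a factor of $4$ in the constants) to treat the case of real-valued centered $Z_i$ still satisfying the exponential moment bound and real $x_i$ with $\sum x_i^2 \leq 2$.

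First I would truncate at level $M = L^{1/2}$. Markov's inequality applied to $\exp(|Z_i|^\delta)$ gives $\PROB(|Z_i| > M) \leq \delta^{-1} e^{-M^\delta}$, and since $M^\delta = (\log N)^{(\delta/2) \log \log N}$ grows faster than any power of $\log N$, a union bound shows that the event $\mathcal{G} = \{|Z_i| \leq M \text{ for every } i \in [N]\}$ holds with overwhelming probability. Letting $\widetilde Z_i = Z_i \ind(|Z_i| \leq M) - \E[Z_i \ind(|Z_i| \leq M)]$, these variables are independent, centered, and bounded by $2M$, so Hoeffding's inequality yields
\[
\PROB\Bigl( \bigl|\textstyle\sum_i x_i \widetilde Z_i \bigr| > L/2 \Bigr) \leq 2 \exp\Bigl(-\frac{(L/2)^2}{8 M^2 \|x\|_2^2} \Bigr) \leq 2 e^{-c L},
\]
which is again overwhelmingly small since $L$ dominates every power of $\log N$.

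The third step is to check that the recentering bias is negligible. Since $Z_i$ is centered, this bias equals $-\sum_i x_i \E[Z_i \ind(|Z_i| > M)]$. Using Cauchy--Schwarz together with the fact that $\E Z_i^2$ is bounded by a constant depending only on $\delta$ (a consequence of the hypothesis $\E\exp(|Z_i|^\delta) \leq 1/\delta$), one has $|\E[Z_i \ind(|Z_i|>M)]| \leq C e^{-M^\delta/2}$, and a further Cauchy--Schwarz over $i$ gives $|\sum_i x_i \E[Z_i \ind(|Z_i| > M)]| \leq \sqrt N \cdot C e^{-M^\delta/2} = o(1)$. Combining the three steps, on $\mathcal{G}$ the original sum equals the truncated sum plus this bias, so $|\sum x_i Z_i| \leq L/2 + o(1) \leq L$ for all $N$ large enough, which gives the lemma.

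This is a routine small-deviation statement for sub-Weibull summands, so there is no genuine obstacle; the main thing to get right is the calibration of $M$ so that simultaneously the truncation is lossless with overwhelming probability and Hoeffding comfortably delivers the bound $L$. The generous size of $L$ makes any choice of $M$ intermediate between polylog of $N$ and $L$ itself work, which is why $M = L^{1/2}$ is a convenient (but not essential) choice.
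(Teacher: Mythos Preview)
Your argument is correct: truncation at a level $M$ growing faster than any fixed power of $\log N$ but slower than $L$ makes the union bound lossless with overwhelming probability, Hoeffding on the bounded recentered variables gives a tail of order $e^{-cL}$, and the recentering bias is superpolynomially small. The bookkeeping with real/imaginary parts and the constants is slightly loose (the ``factor of $4$'' means you actually bound each real sum by $L/4$, not $L$, but the Hoeffding exponent absorbs this), though nothing substantive is missing.

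As for comparison with the paper: there is no proof in the paper to compare with. The authors simply cite \cite[Appendix B]{MR2981427} for this small-deviation estimate and move on. Your write-up therefore supplies a self-contained elementary argument where the paper defers to the literature; the truncation-plus-Hoeffding route you took is essentially what one finds in such appendices (the cited reference uses a closely related high-moment or martingale argument yielding the same conclusion).
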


For $z = E + {\mathbf{i}}\eta$ with $\eta >0$  and $E \in \mathbb{R}$, we introduce the resolvent matrices 
$$
R (z ) = ( X - z I) ^{-1} \hbox{ and } R_0 (z ) = ( X_0 - z I) ^{-1}~,
$$
where $I$ denotes the identity matrix.
The following lemma asserts that the resolvent can be used to estimate the largest eigenvalue of $X$ and $X_0$.
\begin{lemma}\label{lem:resl}
Let $X$ be a  Wigner matrix as in Theorem \ref{thm:main} and let
$\lambda_1 \geq \ldots \geq \lambda_N$ be its  eigenvalues. For any $1 \leq k \leq N$, there exists an integer $1 \leq i \leq N$ such that for all $E$ and $\eta >0$
$$
\frac 1 2 \max( \eta, |\lambda_k - E |) ^{-2} \leq N\eta ^{-1} \Im R( E + {\mathbf{i}}\eta)_{ii}~. 
$$
Moreover, let $1 \leq k \leq L$.  There exists  $c_0 >0$ such that with overwhelming probability, we have $|\lambda_k  - 2 \sqrt N| < L^{c_0} N^{-1/6}$ and for all integers $ 1 \leq i \leq N$, and all $E$ such that  $|E - 2 \sqrt N| <  L^{c_0} N^{-1/6}$, 
$$
 N\eta ^{-1} \Im R( E + {\mathbf{i}}\eta)_{ii} \leq L^{c_0} \min_{ 1 \leq j \leq N} (\lambda_j -E )^{-2}~. 
$$
\end{lemma}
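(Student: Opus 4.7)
The plan is to work throughout from the spectral decomposition of the resolvent. If $v_1,\ldots,v_N$ is an orthonormal basis of eigenvectors of $X$ with eigenvalues $\lambda_1\geq\cdots\geq\lambda_N$, then for $z=E+{\mathbf{i}}\eta$ with $\eta>0$,
\[
\frac{1}{\eta}\,\Im R(z)_{ii} \;=\; \sum_{j=1}^{N}\frac{|v_j(i)|^{2}}{(\lambda_j-E)^{2}+\eta^{2}}.
\]

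For the first (lower bound) claim I would fix $k$, choose $i=i(k)$ maximising $|v_k(i)|^{2}$, and use $\sum_{i}|v_k(i)|^{2}=1$ to deduce $|v_k(i)|^{2}\geq 1/N$. Keeping only the $j=k$ term in the spectral decomposition and using $(\lambda_k-E)^{2}+\eta^{2}\leq 2\max(\eta,|\lambda_k-E|)^{2}$ then yields the desired inequality deterministically, independently of any random matrix input.

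For the rigidity statement $|\lambda_k-2\sqrt{N}|<L^{c_0}N^{-1/6}$ in the range $1\leq k\leq L$, I would invoke the edge rigidity result for Wigner matrices of Erd\H{o}s, Yau and Yin \cite{MR2871147}, which provides with overwhelming probability a bound of the form $|\lambda_k-\gamma_k|\leq N^{o(1)}k^{-1/3}N^{-1/6}$, where $\gamma_k$ is the classical location, together with the estimate $|\gamma_k-2\sqrt{N}|\lesssim k^{2/3}N^{-1/6}$. A sufficiently large choice of $c_0$ then absorbs both contributions uniformly over $k\leq L$, since $L^{o(1)}$ is absorbed by $L^{c_0}$.

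For the upper bound on the resolvent I would first apply Lemma~\ref{delocalization} (with the inner exponent chosen so that $(\log N)^{C}\leq L^{c_0/2}$) to obtain, with overwhelming probability, the uniform delocalization $|v_j(i)|^{2}\leq L^{c_0}/N$ for every pair $i,j$. Substituting into the spectral decomposition and monotonically dropping the $\eta^{2}$ term in the denominator (the map $\eta\mapsto \eta^{-1}\Im R(E+{\mathbf{i}}\eta)_{ii}$ is nonincreasing, so it suffices to treat the $\eta\to 0^{+}$ limit) reduces the claim to the deterministic bound
\[
\sum_{j=1}^{N}\frac{1}{(\lambda_j-E)^{2}} \;\leq\; \frac{L^{c_0}}{\min_{j}(\lambda_j-E)^{2}}
\]
valid when $|E-2\sqrt{N}|<L^{c_0}N^{-1/6}$. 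To prove this I would let $k^{\star}$ minimise $|\lambda_j-E|$ and split the sum into a near-edge part $j\leq L^{A}$ (for a large constant $A$) and a bulk part. On the near-edge part, the edge rigidity gives $|\lambda_j-\lambda_{k^{\star}}|\gtrsim L^{-c_0}N^{-1/6}|j-k^{\star}|^{2/3}$, so the partial sum is at most $O(L^{c_0}N^{1/3}\sum_{m\neq 0}|m|^{-4/3})=O(L^{c_0}N^{1/3})$. On the bulk part, $|\lambda_j-E|\asymp\sqrt{N}$ and the contribution is $O(1)$. Both are dominated by $L^{c_0}/\min_j(\lambda_j-E)^{2}\gtrsim L^{-c_0}N^{1/3}$ after enlarging $c_0$ once more.

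The main obstacle will be tracking constants in the near-edge sum through the rigidity estimates so that only a polylogarithmic loss is incurred; in particular, one must handle the cases where $k^{\star}\neq 1$ and where $E$ lies on either side of $\lambda_{k^\star}$, and verify that the bound holds uniformly over $\eta>0$ rather than only at one scale. The arguments above handle all such cases by systematic use of edge rigidity together with delocalization.
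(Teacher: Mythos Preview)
Your lower-bound argument and the rigidity statement are correct and match the paper's proof. The gap is in the upper bound, specifically in your near-edge estimate.

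You claim that edge rigidity yields
\[
|\lambda_j-\lambda_{k^{\star}}|\;\gtrsim\; L^{-c_0}N^{-1/6}|j-k^{\star}|^{2/3}
\]
for all $j\neq k^\star$ in the near-edge window. This is false. Rigidity only says $|\lambda_j-\gamma_j|\leq L^{c}j^{-1/3}N^{-1/6}$, and for $j,k^\star$ both of size $O(L^{O(1)})$ the rigidity error $L^{c}N^{-1/6}$ is \emph{larger} than the separation $|\gamma_j-\gamma_{k^\star}|\asymp |j^{2/3}-(k^\star)^{2/3}|N^{-1/6}$ of the classical locations. In particular rigidity does not prevent $\lambda_1=\lambda_2$; a lower bound on individual gaps with overwhelming probability is a level-repulsion statement, not a rigidity statement, and is not available here in the form you need. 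Consequently your sum $\sum_{m\neq 0}|m|^{-4/3}$ has no justification for the terms with $|m|$ up to $L^{O(1)}$.

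The paper's fix is simpler than what you attempt: on the near-edge block $1\leq p\leq q$ with $q=\lfloor C L^{3c_0}\rfloor$, it uses only the trivial inequality $(\lambda_p-E)^{-2}\leq \min_j(\lambda_j-E)^{-2}$, so that block contributes at most $Lq\cdot\min_j(\lambda_j-E)^{-2}$. Rigidity is used only for $p>q$, and only in the form of an \emph{upper} bound on $\lambda_p$ (hence a lower bound on $E-\lambda_p\geq C_0 p^{2/3}N^{-1/6}$), which gives $\sum_{p>q}L(\lambda_p-E)^{-2}\lesssim L N^{1/3}q^{-1/3}$. Your ``bulk'' claim $|\lambda_j-E|\asymp\sqrt{N}$ is also incorrect for $j$ just above the cutoff: there $|\lambda_j-E|\asymp j^{2/3}N^{-1/6}$, and the tail sum is $O(LN^{1/3}q^{-1/3})$, not $O(1)$. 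After these corrections the argument closes exactly as in the paper by enlarging $c_0$.
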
 
\begin{proof}
From the spectral theorem, we have
$$
\Im R_{ii} = \sum_{p = 1}^N \frac{ \eta (v_p)_i ^2}{ (\lambda_p - E )^2 + \eta ^2}~,   
$$
where $(v_1, \ldots, v_N)$ is an orthonormal basis of eigenvectors of $X$ and $(v_p)_i$ is the $i$-th coordinate of $v_p$. In particular, 
$$
N \eta^{-1} \Im R_{ii} \geq  \frac{ N  ({v_p})_i ^2}{ (\lambda_k- E )^2 + \eta ^2} \geq  \frac{ N ({v_p})_i ^2}{ 2\max( \eta, |\lambda_k - E |) ^{2}  }~. 
$$
From the pigeonhole principle, for some $i$, $({v_p})_i^2 \geq 1/N$ and the first statement of the lemma follows.

Fix an integer $1 \leq k \leq L$.  From \cite[Theorem 2.2]{MR2871147} and Lemma \ref{delocalization}, for some constants $c_0,C_0>0$, we have, with overwhelming probability, that  the following event $\cE$ holds: $|\lambda_k- 2 \sqrt N| \leq L^{c_0} N ^ {-1/6}$, for all integers $1 \leq p \leq N$, 
$$
\lambda_p \leq 2 \sqrt N - 2 C_0 p^{2/3} N^{-1/6} + L^c p^{-1/3}N^{-1/6}~,
$$
 and $ \| v_p \|^2_{\infty} \leq L / N$. We set $q = \lfloor C L^{3c_0} \rfloor$ for some $C$. Let $E $ be such that $|E - 2 \sqrt N| \leq L^{c_0} N ^ {-1/6}$. On the event $\cE$, if $C$ is large enough, we have, for all $p > q$, $E - \lambda_p  \geq C_0 p^{2/3} N^{-1/6}$ and
$$
\sum_{p = q+1}^{N} \frac{ N  (v_p)_i^2}{ (\lambda_p - E )^2 + \eta ^2} \leq \sum_{p = q+1}^{N} \frac{L}{ (\lambda_p - E )^2}  \leq  \frac 1 {C^2_0}   \sum_{p = q+1}^{N} \frac{L N^{1/3}} {p ^{4/3}} \leq c_1 L  N^{1/3} q^ {-1/3}.
$$
On the other hand, on the same event $\cE$, we have
$$
\sum_{p = 1}^{q} \frac{ N  (v_p)_i ^2}{ (\lambda_p - E )^2 + \eta ^2} \leq \sum_{p = 1}^{q} \frac{ N  (v_p)_i ^2}{ \min_{1 \leq j\leq N} (\lambda_j - E )^2} \leq \frac{L q }{\min_{1 \leq j\leq N} (\lambda_j  - E)^{2}}~.
$$
It remains to adjust the value of the constant $c_0$ to conclude the proof.
\end{proof}

The next step in the proof of \eqref{eq:ll0} is a comparison between the resolvent of $X$ and $X_0$ for $z$ close to $2\sqrt N$. The following result is a corollary of \cite[Theorem 2.1 (ii)]{MR2871147}.

\begin{lemma}\label{loclaw}
Let $X$ be a Wigner matrix as in Theorem \ref{thm:main}. There exists $c >0$ such that, with overwhelming probability, the following event holds:  for all $z = E + {\mathbf{i}}\eta$ such that $|2 \sqrt N - E| \leq  \sqrt N$ and $N^{-1/2} L ^c \leq  \eta \leq N^{1/2}$, all $i \ne j$, we have  
$$
| R(z)_{ij}   | \leq \Delta 
\quad  \hbox{ and } \quad | R(z)_{ii}   | \leq c N^{-1/2},
$$
where $\Delta  = L^c  (|E-2\sqrt N| + \eta)^{1/4}  N^{-7/8} \eta^{-1/2} + L^c N^{-2} \eta^{-1} $. 
\end{lemma}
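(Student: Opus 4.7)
The plan is to deduce this lemma from the local semicircle law of Erd\H{o}s, Yau, and Yin \cite{MR2871147}, which is stated for a \emph{normalized} Wigner matrix whose spectrum sits in $[-2,2]$. Since our matrix $X$ has entries of unit variance, its spectrum is on the scale $[-2\sqrt N, 2 \sqrt N]$, and the lemma is essentially a bookkeeping exercise in the scaling factor.

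First I would rescale: set $W = N^{-1/2} X$, which is a normalized Wigner matrix in the sense of \cite{MR2871147}. Writing $\tilde z = z/\sqrt N$ with $\tilde E = E/\sqrt N$ and $\tilde \eta = \eta/\sqrt N$, the identity
$$
R(z) = (X - zI)^{-1} = (\sqrt N\, W - \sqrt N \tilde z\, I)^{-1} = N^{-1/2} R_W(\tilde z)
$$
is the only algebraic ingredient needed. The hypotheses on $z$ translate exactly into $|\tilde E - 2| \le 1$ and $L^c/N \le \tilde\eta \le 1$, which is the regime in which Theorem 2.1(ii) of \cite{MR2871147} applies.

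The cited local law delivers, with overwhelming probability and uniformly over such $\tilde z$, that $|R_W(\tilde z)_{ii} - m_{sc}(\tilde z)|$ and $|R_W(\tilde z)_{ij}|$ (for $i\ne j$) are both bounded by $L^c \Psi(\tilde z)$, where $\Psi(\tilde z) = \sqrt{\Im m_{sc}(\tilde z)/(N\tilde\eta)} + (N\tilde\eta)^{-1}$. For the diagonal estimate I would use that $m_{sc}$ is uniformly bounded on $\{|\tilde E - 2| \le 1,\ \tilde\eta \le 1\}$, so that $|R_W(\tilde z)_{ii}|$ is $O(1)$ and multiplication by the $N^{-1/2}$ prefactor yields $|R(z)_{ii}| \le c N^{-1/2}$. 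For the off-diagonal estimate, I would use the standard edge asymptotic $\Im m_{sc}(\tilde z) \asymp \sqrt{\tilde\eta + \kappa}$ with $\kappa = ||\tilde E| - 2|$; substituting $\kappa = |E-2\sqrt N|/\sqrt N$ and $\tilde\eta = \eta/\sqrt N$ and inserting the $N^{-1/2}$ from the resolvent rescaling recovers the two terms of $\Delta$.

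There is essentially no conceptual obstacle: the input is a black-box application of a deep but stated result. The only points requiring care are (i) checking that the uniformity in $z$ claimed by the lemma matches the uniformity given by Theorem 2.1(ii) of \cite{MR2871147} (which is standard via a Lipschitz/net argument on $z$, using that the resolvent is $\tilde\eta^{-2}$-Lipschitz), and (ii) verifying that the tail assumption $\E \exp(|X_{ij}|^\delta) \le 1/\delta$ of our Wigner matrices fits the moment hypotheses of the cited local law, which it does since subexponential moments imply all the polynomial moment bounds used in \cite{MR2871147}. The rest is the algebra recorded above.
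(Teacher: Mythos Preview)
Your proposal is correct and follows essentially the same approach as the paper: rescale to $W=N^{-1/2}X$, invoke Theorem~2.1(ii) of \cite{MR2871147} for the normalized resolvent, and translate back via $R(z)=N^{-1/2}R_W(z/\sqrt N)$, using the boundedness of $m_{sc}$ for the diagonal and the edge bound $\Im m_{sc}(\tilde z)\lesssim \sqrt{\kappa+\tilde\eta}$ for the off-diagonal. One small remark: the two-sided asymptotic $\Im m_{sc}\asymp\sqrt{\tilde\eta+\kappa}$ is not quite right outside the spectrum, but only the upper bound is needed here, and that is exactly what the paper quotes (from \cite[Lemma~3.4]{MR2871147}).
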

\begin{proof}
Let $Y = X/ \sqrt N $ and for $z \in \mathbb{C}$, $\Im (z) >0$,  $G (z) = ( Y - z I)^{-1}$. We have $R(z) = N^{-1/2} G(z N^{-1/2})$. Theorem 2.1 (ii) in  \cite{MR2871147} asserts that  with overwhelming probability for all $w= a + {\mathbf{i}} b$ such that $|a| \leq 5$ and $N^{-1} L ^c \leq b \leq 1$, all $i \ne j$, we have  
$$
| G(w)_{ij}  | \leq \delta 
\quad  \hbox{ and } \quad | G(w)_{ii}  - m (w) | \leq  \delta~,
$$
where $\delta  = L^c  \sqrt{\Im (m (w))  / (N b)} + L^c (Nb)^{-1} $ and $m(w)$ is the Cauchy-Stieltjes transform of the semi-circular law (for its precise definition see \cite{MR2871147}).  Then \cite[Lemma 3.4]{MR2871147} implies that, for some $C >0$,  for all $w  = a + {\mathbf{i}}b$, $|a| \leq 5$ and $0 \leq b \leq 1$, we have $|m(w)| \leq C$ and $|\Im( m(w)) | \leq  C \sqrt{ |a - 2| + b}$. We apply the above result for $a = E / \sqrt N$ and $b = \eta / \sqrt N$. We obtain the claimed statement for $R(z) = N^{-1/2} G(z N^{-1/2})$. 
\end{proof}

We use Lemma \ref{loclaw} to estimate the difference between $R(z)$ and $R_0(z)$. 

\begin{lemma}\label{lem:resres0}
Let $X$ be a Wigner matrix as in Theorem \ref{thm:main}, let $X_0$ be
obtained from $X$ by setting to $0$ all diagonal entries, and let
$c_0$ be as in  Lemma \ref{lem:resl}. With overwhelming probability, the following event holds:  for all $z = E + {\mathbf{i}}\eta$ such that $|2 \sqrt N - E| \leq  L^{c_0} N^{-1/6}$ and  $  \eta  =  N^{-1/4}$, all $i$,
 $$
| R_0 (z)_{ii} - R(z)_{ii} | \leq \frac{1}{4 N \eta}~.
$$
\end{lemma}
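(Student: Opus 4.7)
The plan is to run resolvent calculus against the diagonal perturbation $D = X - X_0$, whose entries $D_{jj}=X_{jj}$ are independent centered random variables with the given exponential-moment assumption. The crucial structural point is that $R_0$ is a function of the off-diagonal entries of $X$ alone, so conditionally on those entries, $R_0$ is deterministic and the $(X_{jj})$ remain independent and centered; this makes Lemma~\ref{lem:dev} directly applicable with $R_0$-coefficients.

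From $R^{-1}-R_0^{-1}=D$ I get $R-R_0=-R_0 D R$, and substituting $R=R_0-R_0 D R$ on the right yields
\[
R-R_0 \;=\; -R_0 D R_0 \,+\, R_0 D R_0 D R.
\]
Taking the $(i,i)$ entry decomposes the difference into a linear piece and a quadratic-in-$D$ remainder,
\[
R_{ii}-(R_0)_{ii} \;=\; -\sum_{j}(R_0)_{ij}^{2}\, X_{jj} \;+\; (R_0 D R_0 D R)_{ii}.
\]
For the linear piece I condition on the off-diagonal entries of $X$ and apply Lemma~\ref{lem:dev} to the independent centered variables $(X_{jj})$ with the (now deterministic) coefficients $(R_0)_{ij}^{2}$. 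With overwhelming probability this yields
\[
\Bigl|\sum_{j}(R_0)_{ij}^{2} X_{jj}\Bigr| \;\leq\; L\sqrt{\textstyle\sum_j|(R_0)_{ij}|^{4}} \;\leq\; L\,\max_j|(R_0)_{ij}|\cdot\sqrt{\eta^{-1}\,\Im(R_0)_{ii}},
\]
using the Ward identity $\sum_j|(R_0)_{ij}|^{2}=\eta^{-1}\Im(R_0)_{ii}$. Hence the task reduces to entrywise bounds on $R_0$ comparable to those Lemma~\ref{loclaw} supplies for $R$.

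For the quadratic remainder $(R_0 D R_0 D R)_{ii}$ I expand the inner factor using $R=R_0-R_0 D R$ once more and split the resulting sum into its diagonal part (where the two copies of $D$ meet at the same index $j=k$, controlled deterministically by $|X_{jj}|\leq L$ together with the entrywise bounds on $R_0$ and $R$) and its off-diagonal bilinear part (which, conditionally on the off-diagonal entries of $X$, is a centered bilinear form in independent variables and can be treated by a Hanson--Wright-type concentration). With $\eta=N^{-1/4}$ and $|E-2\sqrt N|\leq L^{c_0}N^{-1/6}$, Lemma~\ref{loclaw} gives $\Delta\leq L^{C_1}N^{-19/24}$ and $|R_{ii}|\leq cN^{-1/2}$; these are amply strong enough that all resulting errors fall well below the target $1/(4N\eta)=N^{-3/4}/4$, with plenty of slack for the $L$-factors.

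The main obstacle is transferring Lemma~\ref{loclaw}'s entrywise control of $R$ to $R_0$, since Lemma~\ref{loclaw} is stated only for the full matrix. I plan to handle this by a bootstrap on the overwhelming-probability event where both Lemma~\ref{loclaw} and $\max_j|X_{jj}|\leq L$ hold: applying the symmetric identity $R-R_0=-RDR_0$ together with the crude a priori bound $\|R_0\|_{\mathrm{op}}\leq\eta^{-1}$ shows that $\max_j|(R_0)_{ij}|$ and $\Im(R_0)_{ii}$ differ from the corresponding quantities for $R$ by an amount strictly smaller than those quantities, so the $R_0$-versions of the bounds in Lemma~\ref{loclaw} follow with slightly enlarged constants. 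A cleaner alternative if the bootstrap constants become awkward is a homotopy $X_s=X_0+sD$ with a Gronwall estimate along $s\in[0,1]$ for the matrix-valued ODE satisfied by the resolvent. Finally, uniformity in $z$ is achieved by noting that $\partial_E R$ and $\partial_E R_0$ are bounded in operator norm by $\eta^{-2}=N^{1/2}$, so discretizing $E$ on a polynomial grid of spacing $N^{-C}$ and union-bounding over the grid and over $i\in\{1,\dots,N\}$ preserves the overwhelming-probability conclusion.
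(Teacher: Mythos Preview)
Your overall architecture---the second-order resolvent expansion $R-R_0=-R_0DR_0+R_0DR_0DR$ and controlling the linear term via Lemma~\ref{lem:dev} using that $(X_{jj})_j$ is independent of $R_0$---is exactly what the paper does. But you have misidentified the main obstacle. The matrix $X_0$ is itself a Wigner matrix in the sense of the paper (the hypotheses allow $\sigma_0\geq 0$, so diagonal variance zero is permitted), and hence Lemma~\ref{loclaw} applies to $R_0$ directly with the same $\Delta$ and the same diagonal bound $cN^{-1/2}$. No bootstrap or homotopy is needed; the paper simply invokes Lemma~\ref{loclaw} for both $R$ and $R_0$ without further comment.

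Your proposed bootstrap is in fact problematic as written: from $R_0-R=RDR_0$ with only $\|R_0\|_{\mathrm{op}}\leq\eta^{-1}$ you get $|(R_0)_{ij}-R_{ij}|\leq L\eta^{-1}\sum_k|R_{ik}|$, and with $\eta=N^{-1/4}$ and $\sum_k|R_{ik}|\lesssim N\Delta\leq L^{c}N^{5/24}$ this is of order $L^{c+1}N^{11/24}$, far larger than either $\Delta$ or the target $1/(4N\eta)=N^{-3/4}/4$. So the crude bootstrap does not close; the homotopy fallback could be made to work but is unnecessary once you observe that Lemma~\ref{loclaw} already covers $X_0$.

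For the quadratic remainder the paper also takes a simpler route than Hanson--Wright: writing $J(z)=(R_0DR_0DR)_{ii}=\sum_k X_{kk}R_{ki}G_k$ with $G_k=\sum_j(R_0)_{ij}X_{jj}(R_0)_{jk}$, it bounds each $G_k$ by Lemma~\ref{lem:dev} (again using independence of $(X_{jj})_j$ from $R_0$) and then takes absolute values in the $k$-sum, using $|X_{kk}|\leq L$ together with Lemma~\ref{loclaw} for $|R_{ki}|$. The net argument for uniformity in $E$ is the same as yours.
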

\begin{proof}
The resolvent identity states that if $A-z I$ and $B-z I $ are invertible matrices then 
\begin{equation}\label{eq:resid}
(A - zI)^{-1} = (B-zI)^{-1} + (B - zI)^{-1}(B-A)(A - zI)^{-1}~.
\end{equation}
Applying twice this identity, it implies that 
$$
R = R_0 + R_0 (X_0-X)R_0 + R_0 (X_0-X)R_0 (X_0-X)R
$$
(where we omit to write the parameter $z$ for ease of notation). For any integer $1 \leq i \leq N$, we thus have 
$$
R_{ii} - (R_0)_{ii} =  -\sum_{j} (R_0)_{ij}  X_{jj} (R_0)_{ji} + \sum_{j,k} (R_0)_{ij} X_{jj} (R_0)_{jk} X_{kk}   R_{ki} = - I(z) + J(z)~.
$$
Note that $X_{jj}$ is independent of $R_0$. By Lemma \ref{lem:dev} and
Lemma \ref{loclaw} we find that, with overwhelming probability,
$$
|I(z)| \leq  L \Delta^2 \sqrt N + c L N^{-1}~. 
$$
For a given $z = E + {\mathbf{i}} \eta$ such that $|E - 2 \sqrt N| \leq L^{c_0} N^{-1/6}$ and $\eta = N^{-1/4}$, it is straightforward to check that, for some $c>0$, {$\Delta \le L^c N^{-19/24}$} and $|I(z)|\leq L^c N^{-13/12} =  o(1/(N\eta))$. 

Similarly, we have
$$
|J(z)| \leq \sum_{k} |X_{kk}| |R_{ki}|  |G_k| \quad \hbox{ with } \; G_k =  \sum_{j} (R_0)_{ij} X_{jj} (R_0)_{jk}~. 
$$
For a given $z$, by Lemma \ref{lem:dev} and Lemma \ref{loclaw}, we have with overwhelming probability, for all $k$, $|G_k| \leq L^c N^{-13/12}$ and $| J(z) | \leq   L (\Delta N  + c N^{-1/2} ) L^c N^{-13/12} =o(1/(N\eta))$.

For a given $z$, let $\mathcal E_z$ be the event that  $\max_{1 \leq i \leq N} |R(z)_{ii} - R_0(z)_{ii}| \leq (8 N \eta)^ {-1}$ and $\mathcal E'_z$ the event that $\max_{1 \leq i \leq N}|R(z)_{ii} - R_0(z)_{ii}| \leq (4 N \eta)^ {-1}$. We  have proved so far that for a given $z = E + {\mathbf{i}}\eta$ such that $|E - 2 \sqrt N| \leq L^{c_0} N^{-1/6}$ and $\eta = N^{-1/4}$,  with overwhelming probability, $\mathcal E_z$ holds.  By a net argument, it implies that with overwhelming probability, the events $\mathcal E_z'$  hold  jointly for all $z = E + {\mathbf{i}} \eta$ with $|E - 2 \sqrt N| \leq L^{c_0} N^{-1/6}$ and $\eta = N^{-1/4}$. Indeed, from the resolvent identity \eqref{eq:resid}, we have $|R_{ij} (E + {\mathbf{i}} \eta) - R_{ij} ( E' + {\mathbf{i}} \eta) | \leq \eta^{-2} |E - E'|$. It follows that if  $|E - E'| \leq \eta^2 (8 N \eta)^ {-1} \leq  N^{-1} $ then  $|R(z)_{ii} - R_0(z)_{ii}| \leq (8 N \eta)^ {-1}$.  Let $\mathcal N$ be a finite subset of the interval $K = \{ E : |E - 2 \sqrt N| \leq L^{c_0} N^{-1/6}\}$ such that for all $E \in K$, $\min_{E'  \in \mathcal N} |E - E'| \leq N^{-1}$.  We may assume that $\mathcal N$ has at most $N$ elements. From what precedes we have the inclusion, with $\eta = N^{-1/4}$,
$$
 \bigcap_{z = E + {\mathbf{i}} \eta : E \in \mathcal N} \mathcal E_z \; \subseteq \bigcap_{z = E + {\mathbf{i}}\eta : E \in K} \mathcal E'_z~. 
$$
From the union bound, the right-hand side holds with overwhelming probability. It concludes the proof of the lemma. \end{proof}

Now we have all ingredients necessary to conclude the proof of
\eqref{eq:ll0}. 
Let $\eta = N^{-1/4}$. We prove that for some $c >0$, with overwhelming probability,  
$$
\lambda \leq \lambda_0 + L^c \eta~.
$$
By Lemma \ref{lem:resl}, with overwhelming probability,  $|\lambda - 2 \sqrt N | \leq L^{c_0} N^{-1/6}$ and for some $j$, 
$$
N \eta^{-1} \Im R (\lambda + {\mathbf{i}} \eta)_{jj} \geq \frac 1 2 \eta^{-2}. 
$$
and if $\lambda > \lambda_0$, 
$$
N \eta^{-1} \Im R_0 (\lambda + {\mathbf{i}} \eta)_{jj} \leq L^{c_0} (\lambda - \lambda_0)^{-2}~. 
$$
By Lemma \ref{lem:resres0}, we deduce that  with overwhelming probability, if $\lambda > \lambda_0$, 
$$
\frac 1 4 \eta^{-2} \leq L^{c_0} (\lambda - \lambda_0)^{-2}~. 
$$
Hence, $\lambda \leq \lambda_0 + 2 L^{c_0/2} \eta$, concluding the proof of  \eqref{eq:ll0}.

\subsection{Proof of Lemma \ref{fliponeelem}}

Let $\lambda = \lambda_1 \geq \cdots \geq \lambda_N$ be the eigenvalues of $X$.  For any $(i,j)$, let $\lambda^{(ij)}$ be the largest eigenvalue of ${X^{(ij)}}$. We start by proving that $\lambda$ and $\lambda^{(ij)}$ are close compared to their fluctuations. We have 
$$
\lambda \geq \langle u^{(ij)} , Xu^{(ij)} \rangle = \lambda^{(ij)} + \langle u^{(ij)} , (X - {X^{(ij)}} ) u^{(ij)} \rangle  \geq  \lambda^{(ij)} - 2 (|X_{ij}| + |X'_{ij}|) \| u^{(ij)} \|^2_{\infty}~,
$$
{ where $u^{(ij)}$ is as in Lemma \ref{fliponeelem}.}
 Since $X$ and ${X^{(ij)}}$ have the same distribution, we deduce from
 Lemma \ref{delocalization} that, { for any $c_0 >0$, there exists $C > 0$ such that with probability at least $1 - CN^{2-c_0}$,  $\| v \|_{\infty} \leq (\log N)^C / \sqrt N$ and $\max_{ij} \| u^{(ij)} \|_{\infty} \leq (\log N)^C / \sqrt N$. For all $N$ large enough, we have $(\log N)^C  \leq L$, where $L$ is defined in (\ref{eq:defL}). Hence for any $c_0$, for some new constant $C>0$, with probability at least $1 - CN^{2-c_0}$,  $\| v \|_{\infty} \leq L/ \sqrt N$ and $\max_{ij} \| u^{(ij)} \|_{\infty} \leq L/ \sqrt N$. Since $c_0$ can be taken arbitrarily large, we deduce that}  with overwhelming probability, $\| v \|_{\infty} \leq L / \sqrt N$,
$\max_{ij} \| u^{(ij)} \|_{\infty} \leq L/ \sqrt N$ and $\max_{ij}(
|X_{ij}| + |X'_{ij}|) \leq L/2$. On this event, we get 
$$
\lambda \geq  \lambda^{(ij)}  - \frac{L^{3}} {N}~.
$$
Reversing the role $X$ and ${X^{(ij)}}$  and using the union bound, we deduce that, with overwhelming probability,
$$
\max_{ij} |\lambda  -  \lambda^{(ij)} | \leq \frac{L^{3}} {N}~.
$$
It follows from \cite[Theorem 1.14]{MR2669449} that, for any 
$\rho >0$, 
there exists $\kappa >0$ such that,  for all $N$ large enough, 
$$
\PROB ( \lambda_2  <  \lambda - N^{-1/2-\rho} ) \geq 1 - N^{-\kappa}~.
$$
Let $(v_1, \ldots, v_p)$ be an orthonormal basis of eigenvectors of
$X$ associated to the eigenvalues $(\lambda_1, \ldots, \lambda_N)$
with $v_1  = v$. We set $\theta = 2/5 - 3 \rho / 5$ and $q = \lfloor
N^{\theta} \rfloor$. For some constant $c>0$ to be defined and $\rho \in (0,1/16)$, we introduce the event $\cE_\rho$ such that
\begin{itemize}
\item 
$\lambda_2  <  \lambda -  N^{-1/2 - \rho}$ and $ \lambda_q \leq  \lambda - c q^{2/3} N^{-1/6}$~;
\item
$\max_{1 \leq p \leq q} \| v_p \|_{\infty} \leq L / \sqrt N$ and $\max_{ij} \| u^{(ij)} \|_{\infty} \leq L / \sqrt N$~; 
\item
$\max_{ij}( |X_{ij}| + |X'_{ij}|) \leq L/2$~.
\end{itemize}
From what precedes, Lemma \ref{delocalization} and \cite[Theorem 2.2]{MR2871147}, for some $c$ small enough, for any $\rho >0$ there exits $\kappa >0$ such that for all $N$ large enough, $\PROB(\cE_\rho) \geq 1 - N^{-\kappa}$. Note also, that we have checked that if $\cE_\rho$ holds then $\max_{ij} |\lambda  -  \lambda^{(ij)} | \leq  L^{3} / N$.

On the event $\cE_\rho$, we now prove that $v$ and $u^{(ij)}$ are
close in $\ell^{\infty}$-norm. For a fixed $(i,j)$, we write,
$u^{(ij)} = \alpha v + \beta x + \gamma y$, where $\alpha^2 + \beta ^2
+ \gamma^2 = 1$ with $\beta,\gamma$ non-negative real numbers, $x$ is a
unit vector in the vector space spanned by $(v_2, \ldots, v_q)$, and $y$ is a unit vector in the vector space spanned by $(v_{q+1}, \ldots, v_N)$. Set 
\[
w = {(X^{(ij)} - X)} u^{(ij)} + ( \lambda - \lambda^{(ij)} ) u^{(ij)}.
\]
We have
$$
\lambda u^{(ij)} = \alpha \lambda v + \beta X x + \gamma X y +  w~. 
$$
Taking the scalar product with $y$, we find
$$
\lambda \gamma = \lambda \langle y , u^{(ij)} \rangle = \gamma \langle y , X y \rangle  + \langle y, w\rangle \leq (\lambda - c q^{2/3} N^{-1/6})\gamma  +  \langle y, w\rangle~.
$$
Hence, 
$$
\gamma\leq  c^{-1} q^{-2/3} N^{1/6} \| w\|_2 \leq
 c^{-1} q^{-2/3} N^{1/6} \left( \frac{L^{2} }{\sqrt N}   + 
 \frac{L^{4}}{N^{3/2}}   
\right) \leq 2  c^{-1} L^{4}  N^{-2\theta / 3 - 1/3}~.
$$
Similarly, taking the scalar product with $x$, we find
$$
\beta \leq N^{1/2 + \rho} \langle x, w \rangle \leq  N^{1/2 + \rho} \left( \left| \langle x,  (X - {X^{(ij)}}) u^{(ij)} \rangle\right|  +  \frac{L^{ 3}}{N} \right)~.
$$
Since $|\langle a , b \rangle| \leq \| a\|_{\infty} \|b \|_1 \leq m  \|a \|_{\infty} \|b\|_{\infty} $ where $m$ is the number of non-zeros entries of $b$, we have $\left| \langle x,  (X - {X^{(ij)}}) u^{(ij)} \rangle\right| \leq  \| x \|_{\infty} L^{2} / \sqrt N$. By construction, $x  = \sum_{p=2}^q \gamma_p v_p$ where $\sum_p |\gamma_p|^2 = 1$. If $\cE_\rho$ holds, { using the Cauchy-Schwarz inequality and $ \| v_p \|_{2} = 1$,} we deduce that 
$$
\| x \|_{\infty} \leq \sum_{p=2}^q |\gamma_p| \| v_p \|_{\infty} \leq \frac{L}{\sqrt N} \sum_{p=2}^q |\gamma_p| \leq \frac{L \sqrt q}{\sqrt N} \leq L N^{\theta / 2 - 1/2}~. 
$$
So finally, 
$$
\beta \leq 2   L^{3} N^{-1/2 + \theta /2 + \rho}~,
$$
We deduce that $|\alpha| = \sqrt{1 - \beta^2 - \gamma^2} \geq 1 - \beta - \gamma$ is positive for all $N$ large enough. We set $s = \alpha / |\alpha|$. We find, since $\|y\|_{\infty} \leq \| y\|_2 \leq 1$,
$$
\| s v - u^{(ij)}\|_{\infty} \leq ( 1 - |\alpha| ) \| v \|_{\infty} + \beta \|x \|_{\infty} +  \gamma \leq  2\beta \|x \|_{\infty} +2\gamma~.
$$
For our choice of $\theta= 2/5 - 3 \rho / 5$,  this last expression is
$O( L^{4} N^{-3/5 + 8 \rho /5})$. {Indeed, we have 
\begin{align*}
&\gamma \leq  2c^{-1} L^4 N^{-4/15+2\rho /5-1/3}=  2c^{-1} L^4 N^{-3/5+2\rho /5}, \\
&\|x \|_{\infty} \leq  L N^{1/5-3\rho /10-1/2} = LN^{-3/10-3\rho /10},\\
&\beta  \leq  2 L^3 N^{-1/2+1/5-3\rho/10+\rho} = 2L^3 N^{-3/10+7\rho/10}.
\end{align*}}
Since $\rho<1/16$, we have $3/5 - 8 \rho /5 > 1/2$. Hence, finally, if we set ${\kappa^{\prime}} = 1/10 - 8  \rho/ 5 >0$, we get that  $\| s v - u^{(ij)}\|_{\infty}  = O ( L^4 N^{-1/2{ - \kappa^{\prime}}})$. This concludes the proof of the lemma.

\section{Proof of Theorem \ref{thm:main2}}
\label{sec:thm2}
{  
The proof of Theorem \ref{thm:main2} relies on the rigorous justification of the heuristic argument sketched below the statement of Theorem \ref{thm:main2}, see the forthcoming Lemma \ref{lem:llk}. This is performed by a careful perturbation argument on the resolvent in Lemma \ref{lem:resresk}. Indeed, the resolvent has nice analytical properties and it is intimately connected to the spectrum, as illustrated in Lemma \ref{lem:reslk}}. 

Recall that $S_k=\{(i_1,j_1),\ldots,(i_k,j_k)\}$
is the set of $k$ pairs 
chosen uniformly at random (without replacement) from the set of all ordered pairs $(i,j)$ of indices with $1\le
i\le j\le N$ which is used in the definition of $X^{[k]}$. We denote by $\lambda$ and $\lambda^{[k]}$ the largest eigenvalues of $X$ and $X^{[k]}$. Recall the definition of $L = L_N$ in \eqref{eq:defL} and the notion of overwhelming probability immediately below \eqref{eq:defL}. The main technical lemma is the following:
\begin{lemma}\label{lem:llk}
Let $X$ be a Wigner matrix as in Theorem \ref{thm:main2} and let $\lambda = \lambda_1 \geq \cdots \geq \lambda_N$ be its eigenvalues. For any $c>0$  there exists a constant $c_2 >0$ such that for all $\eps >0$, for all $N$ large enough, with probability at least $1- \eps$,
$$\max_{k \leq N^{5/3} L^{-c_2}}\max_{p \in \{1,2\}} | \lambda_p - \lambda_p^{[k]}| \leq N^{-1/6} L^{-c}~.$$
\end{lemma}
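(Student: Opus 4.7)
The plan is to mirror the strategy of the proof of \eqref{eq:ll0}: reduce the eigenvalue comparison $|\lambda_p - \lambda_p^{[k]}|$ to closeness of diagonal resolvent entries at a well-chosen spectral parameter. I fix $\eta_0 = N^{-1/6} L^{-C}$ for a large constant $C$ depending on $c$ and on the $c_0$ of Lemma \ref{lem:resl}. By the first statement of Lemma \ref{lem:resl} applied to $X$, there is an index $j$ with $N\eta_0^{-1} \Im R(\lambda_p+i\eta_0)_{jj} \geq \tfrac12 \eta_0^{-2}$. If I establish the bound $|R^{[k]}_{jj}(z) - R_{jj}(z)| \leq 1/(8N\eta_0)$ at $z=\lambda_p+i\eta_0$, then the second statement of Lemma \ref{lem:resl} applied to $X^{[k]}$ forces $\min_m|\lambda_m^{[k]} - \lambda_p| \leq 2L^{c_0/2}\eta_0 \leq N^{-1/6}L^{-c}$; edge-gap rigidity (from \cite[Theorem 2.2]{MR2871147}, already used in Lemma \ref{fliponeelem}) ensures that the minimizing $m$ matches $p$ for both $p=1$ and $p=2$, giving the desired eigenvalue bound.

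I would work on a good event $\cE$ combining: delocalization for the top eigenvectors of $X$ and $X^{[k]}$ (Lemma \ref{delocalization}); the edge local law for both $R$ and $R^{[k]}$ (Lemma \ref{loclaw}, applicable to $X^{[k]}$ since it has the same distribution as $X$); eigenvalue rigidity near $2\sqrt{N}$; the entrywise bound $\max_{ij}(|X_{ij}|+|X'_{ij}|)\leq L$; and a Chernoff-type bound on the row degrees of the resampled support, which is at most $k/N + L$ with overwhelming probability. Writing $\Delta = X^{[k]} - X$, the resolvent identity expanded once reads $R^{[k]} - R = -R\Delta R + R\Delta R \Delta R^{[k]}$. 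The first-order term is
\[
(R\Delta R)_{jj} = \sum_{\{a,b\}} (1+\mathbbm{1}_{a\neq b})\, U_{\{a,b\}}\, R_{ja} R_{bj},
\]
where the sum ranges over the resampled unordered pairs and $U_{\{a,b\}} = X'_{ab}-X_{ab}$ is independent of $X$ and mutually independent across distinct pairs. Conditionally on $X$ and the random set $S_k$, Lemma \ref{lem:dev} bounds this by $L\sqrt{\sum |R_{ja}R_{bj}|^2}$ with overwhelming probability; using the off-diagonal local-law bound $|R_{ab}|\leq L^{c'}N^{-5/6}$ at the scale $\eta_0$ together with $|S_k|\leq k \leq N^{5/3}L^{-c_2}$, this is at most $L^{O(1)-c_2/2} N^{-5/6}$, which is $\leq 1/(8N\eta_0)$ once $c_2$ is large enough in terms of $c$.

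The main obstacle is the remainder $(R\Delta R \Delta R^{[k]})_{jj}$: it involves $R^{[k]}$, which is coupled to the $U_e$'s, and the naive operator-norm bound via $\|\Delta\|_F^2 \lesssim kL^2$ is prohibitively large at the edge scale. I would iterate the resolvent identity once more to separate out a bilinear form in the $U_e$'s, controlled by a decoupling and concentration argument together with the Ward identity $\sum_a|R_{ja}|^2 = \Im R_{jj}/\eta_0$ and the local-law bounds, from a higher-order tail. Each additional power of $\Delta$ sandwiched between two resolvents contributes roughly $k\Delta_R^2 L^2 \leq L^{O(1)-c_2}$, which is small for $c_2$ large and makes the tail summable.

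Finally, a continuity-in-$E$ net argument exactly as in the closing paragraph of the proof of Lemma \ref{lem:resres0} upgrades the pointwise resolvent bound to a uniform bound in $E$ on an interval of length $L^{c_0}N^{-1/6}$ around $2\sqrt N$, and a standard coupling of $(X^{[k]})_k$ combined with a union bound over the polynomial range of $k$ yields the claimed uniform statement $\max_{k\leq N^{5/3}L^{-c_2}}$; all losses are absorbed into the overwhelming-probability bounds.
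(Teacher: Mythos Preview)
Your overall strategy---reduce eigenvalue closeness to a diagonal resolvent comparison via Lemma~\ref{lem:resl}, then control $R^{[k]}-R$ at a scale $\eta_0=N^{-1/6}L^{-C}$---is exactly the paper's. The difficulty lies entirely in the resolvent comparison, and there your argument has a real gap.

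You write that $U_{\{a,b\}}=X'_{ab}-X_{ab}$ is independent of $X$. It is not: it contains $X_{ab}$. Conditioning on $X$ and $S_k$, the variable $U_{\{a,b\}}$ has conditional mean $-X_{ab}$, not zero, so Lemma~\ref{lem:dev} does not apply to the full first-order term. Splitting $U=X'-X$, the $X'$-part is fine, but the remaining sum $\sum_{(a,b)\in S_k} X_{ab}\,R_{ja}R_{bj}$ is deterministic given the conditioning, and the naive bound $kL\delta^2\approx L^{O(1)}$ is far larger than the target $1/(N\eta_0)\approx N^{-5/6}$. Using only the randomness of $S_k$ does not help either, since the summands are correlated with $R$. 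The paper's fix (its Lemma~\ref{lem:resresk}) is a genuine additional idea: write $R^{[k]}-R$ as a telescoping sum over single resamplings and run a martingale argument with \emph{two} filtrations---a forward one for the $X'_{i_tj_t}$ increments and a \emph{backward} one for the $X_{i_tj_t}$ increments, exploiting that $X_{i_tj_t}$ is independent of $X^{[t]}$. This is precisely what restores the centering you need; without it the first-order term already fails.

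Two smaller points. First, ``edge-gap rigidity from \cite[Theorem 2.2]{MR2871147}'' does not give a lower bound on $\lambda_1-\lambda_2$; rigidity only locates eigenvalues to accuracy $L^{c_0}N^{-1/6}$. To match the minimizing index $m$ to $p$ you need a genuine gap result (the paper invokes \cite[Theorem 2.7]{MR3253704}), and this is why the conclusion holds with probability $1-\eps$ rather than with overwhelming probability. Second, your treatment of the second- and higher-order terms is too schematic to assess; the paper's telescoping/martingale expansion handles all orders simultaneously and bypasses the decoupling issues you allude to.
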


We postpone the proof of Lemma \ref{lem:llk} to the next subsection.  We denote by $R (z)  = (X - z I)^{-1}$ and $R^{[k]} (z) = (X^{[k]}- z I)^{-1}$ the resolvent of $X$ and $X^{[k]}$.   The proof of Lemma \ref{lem:llk} is based on this comparison lemma on the resolvents. 

\begin{lemma}\label{lem:resresk}
Let $X$ be a Wigner matrix as in Theorem \ref{thm:main}. Let $c_0 >0$ be as in Lemma \ref{lem:resl} and let $c_1 >0$. There exists $c_2>0$ such that, with overwhelming probability, the following event holds:  for all $k \leq N^{5/3} L^{-c_2}$, for all $z = E + {\mathbf{i}} \eta$ such that $|2 \sqrt N - E| \leq  L^{c_0} N^{-1/6}$ and  $  \eta  =  N^{-1/6} L^{-c_1}$,
 $$
\max_{1 \leq i,j \leq N} N \eta | R^{[k]} (z)_{ij} - R(z)_{ij} | \leq \frac{1}{L^2}~.
$$
\end{lemma}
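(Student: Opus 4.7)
My plan is to apply the resolvent identity in a form that isolates the ``fresh'' randomness from a common backbone matrix, and then convert the resulting sparse sums into $\ell^2$-norm bounds via Lemma \ref{lem:dev} combined with the local law of Lemma \ref{loclaw}.

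First, introduce the auxiliary matrix $Y_0 := X - X|_{S_k}$ obtained from $X$ by setting to zero the entries in the random positions $S_k$, and set $R_0 := (Y_0 - zI)^{-1}$. Then $X = Y_0 + X|_{S_k}$ and $X^{[k]} = Y_0 + X'|_{S_k}$ are both perturbations of $Y_0$ by an iid, centered, sparse matrix supported on $S_k$; in particular, conditionally on $X|_{S_k^c}$ and $S_k$, both $X|_{S_k}$ and $X'|_{S_k}$ are iid, centered, and \emph{independent of $R_0$}. Iterating the resolvent identity for $R$ and $R^{[k]}$ about $R_0$ and subtracting gives
\[
R^{[k]} - R \;=\; -\,R_0 D R_0 \;+\; R_0\bigl(X'|_{S_k}\, R_0\, X'|_{S_k}\, R^{[k]} \,-\, X|_{S_k}\, R_0\, X|_{S_k}\, R\bigr),
\]
where $D := X'|_{S_k} - X|_{S_k}$ has iid centered entries independent of $R_0$.

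The leading term I handle by writing $(R_0 D R_0)_{ij} = \sum_{\alpha \in S_k} D_\alpha\, C^{R_0}_\alpha$ with $C^{R_0}_\alpha := (R_0)_{ip_\alpha}(R_0)_{q_\alpha j} + (R_0)_{iq_\alpha}(R_0)_{p_\alpha j}\ind(p_\alpha \ne q_\alpha)$, and applying Lemma \ref{lem:dev} conditionally on $R_0$ and $S_k$ to get $|(R_0 D R_0)_{ij}| \le L\,\|C^{R_0}\|_2$ with overwhelming probability. The local law for $R_0$ (giving $|(R_0)_{ij}| \lesssim L^{O(1)} N^{-5/6}$ for $i \ne j$ and $\sum_p|(R_0)_{ip}|^2 = \eta^{-1}\Im(R_0)_{ii} \lesssim L^{O(1)}N^{-2/3}$ in the spectral edge regime) combined with hypergeometric concentration for the uniform random subset $S_k$ yields
\[
\|C^{R_0}\|_2^2 \;\lesssim\; \frac{k}{n}\,\|R_0 e_i\|_2^2\,\|R_0 e_j\|_2^2 \;\lesssim\; L^{O(1)-c_2}\,N^{-5/3}
\]
for $k \le N^{5/3}L^{-c_2}$, so $\|C^{R_0}\|_2 \le L^{O(1)-c_2/2} N^{-5/6}$ and $|(R_0 D R_0)_{ij}| \le L^{1+O(1)-c_2/2} N^{-5/6}$. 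Multiplying by $N\eta = L^{-c_1} N^{5/6}$ gives $N\eta\,|(R_0 D R_0)_{ij}| \le L^{O(1)-c_1-c_2/2}$, which is at most $L^{-2}$ once $c_2$ is chosen sufficiently large depending on $c_0$ and $c_1$.

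The main obstacle is establishing the required local law for $R_0$: the matrix $Y_0$ is not literally a Wigner matrix, being $X$ with up to $\sim N^{5/3}$ random entries zeroed out. I would prove it by combining Lemma \ref{loclaw} for $R$ with the resolvent identity $R - R_0 = -R_0 X|_{S_k} R$, bounding the correction by the very same sparse-sum concentration argument applied now to the iid centered entries $X|_{S_k}$ (which are independent of $R_0$), and running a bootstrap: a crude a priori bound from $\|R_0\|_{\mathrm{op}} \le 1/\eta$ is improved in successive rounds to an entrywise bound matching Lemma \ref{loclaw} up to polylogarithmic factors. The higher-order term in the above expansion of $R^{[k]} - R$ is bounded analogously, using in addition the a priori local law for $R^{[k]}$, which holds since $X^{[k]}$ has the same distribution as $X$. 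Finally, uniformity in $k$ follows from a union bound over $1 \le k \le \lfloor N^{5/3}L^{-c_2}\rfloor$, valid since the bound at each $k$ fails with probability smaller than any inverse polynomial in $N$.
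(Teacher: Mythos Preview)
Your expansion around the ``punctured'' resolvent $R_0=(Y_0-zI)^{-1}$ is natural, and the estimate for the leading term $R_0DR_0$ is fine once you have a local law for $R_0$. But two steps do not go through as written. First, your bootstrap for the local law of $R_0$ relies on the identity $R-R_0=-R_0\,X|_{S_k}\,R$ and the independence of $X|_{S_k}$ from $R_0$. That independence is real, but Lemma~\ref{lem:dev} needs the summand $X_{p_\alpha q_\alpha}$ to be independent of the \emph{entire} coefficient $(R_0)_{ip_\alpha}R_{q_\alpha j}$, and $R$ depends on every entry of $X$, including those in $S_k$. Any finite iteration of the resolvent identity between $R$ and $R_0$ produces products of $R$'s and $X|_{S_k}$'s that are mutually dependent, so the bootstrap is circular. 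Second, even granting a local law for $R_0$, the second-order remainder $R_0AR_0AR$ (with $A=X|_{S_k}$) is too large without concentration: a crude bound gives $|(R_0AR_0AR)_{ij}|\lesssim k\cdot L^{O(1)}N^{-5/6}\cdot L\cdot N^{-5/6}\sim L^{O(1)-c_2}$, which carries no negative power of $N$, while the target is $(N\eta)^{-1}L^{-2}\sim N^{-5/6}$. You cannot recover this with Lemma~\ref{lem:dev} because the inner $A_{rs}$ is dependent both on $(R_0AR_0)_{ir}$ (through the first $A$) and on $R_{sj}$; the same obstruction hits $R_0A'R_0A'R^{[k]}$, since $A'=X'|_{S_k}$ is not independent of $R^{[k]}$.

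The paper sidesteps both issues by never forming $Y_0$. It writes $R^{[k]}_{ij}-R_{ij}$ as a telescoping sum $\sum_{t=0}^{k-1}(R^{[t+1]}_{ij}-R^{[t]}_{ij})$ and, for each $t$, expands to third order about the matrix with the single entry $(i_{t+1},j_{t+1})$ set to zero. The key gain is that every $X^{[t]}$ is a genuine Wigner matrix, so Lemma~\ref{loclaw} applies at each step without any bootstrap. The resulting increments are organized as bounded martingale differences with respect to the forward filtration $\cF_t=\sigma(X,S_k,X'_{i_1j_1},\dots,X'_{i_tj_t})$, and Azuma--Hoeffding supplies the $\sqrt{k}$ saving that plays the role of your Lemma~\ref{lem:dev}. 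A further wrinkle is that the correction terms involving the \emph{original} entries $X_{i_tj_t}$ are not adapted to $(\cF_t)$; the paper introduces a \emph{backward} filtration $\cF'_t=\sigma(X',S_k,(X_{ij})_{(i,j)\notin S_t})$ to turn those into martingale differences as well.
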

We postpone the proof of Lemma {\ref{lem:resresk}} to the next subsection. Our next lemma connects the resolvent with eigenvectors. 
\begin{lemma}\label{lem:reslk}
Let $X$ be a Wigner matrix as in Theorem \ref{thm:main} and let $\eps
>0$. There exist $c_1,c_2$ such that the following event holds for all $N$ large enough with probability at least $1 - \eps$:  for all $k \leq N^{5/3} L^{-c_2}$, we have, with $z = \lambda + {\mathbf{i}}\eta$, $ \eta  =  N^{-1/6} L^{-c_1}$,
 $$
\max_{1 \leq i,j \leq N} |  N \eta  \Im R(z)_{ij} - N v_i v_{j} | \leq \frac 1 {L^2} \quad \hbox{ and } \quad 
\max_{1 \leq i,j \leq N} | N\eta \Im R^{[k]} (z )_{ij} - N v^{[k]}_i v^{[k]}_{j} | \leq \frac 1 {L^2}~.
$$
\end{lemma}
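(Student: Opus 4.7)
The plan is to expand each resolvent entry in an orthonormal eigenbasis and isolate the rank-one contribution of the top eigenvector, then control the remaining tail by combining delocalization (Lemma \ref{delocalization}) with edge rigidity. Writing $(v_1,\ldots,v_N)$ for an orthonormal basis of eigenvectors of $X$ with $v_1=v$ and $\lambda_1=\lambda$, the spectral decomposition at $z=\lambda+i\eta$ gives
$$
N\eta\,\Im R(z)_{ij} - N v_i v_j \;=\; N\eta^2\sum_{p\ge 2}\frac{(v_p)_i(v_p)_j}{(\lambda_p-\lambda)^2+\eta^2}.
$$
The analogous expansion of $R^{[k]}(z)$ in the eigenbasis $(v^{[k]}_p)$ of $X^{[k]}$ produces a similar tail for $p\ge 2$ together with a first-term correction equal to $-N v^{[k]}_i v^{[k]}_j\,(\lambda^{[k]}-\lambda)^2/((\lambda^{[k]}-\lambda)^2+\eta^2)$. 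Using delocalization $N|v^{[k]}_i v^{[k]}_j|\le L^2$ and the estimate $|\lambda^{[k]}-\lambda|\le N^{-1/6}L^{-c}$ furnished by Lemma \ref{lem:llk} (applied with $c$ to be chosen), this correction is bounded by $L^{2-2(c-c_1)}$, which is $\le 1/(2L^2)$ provided $c\ge c_1+2$.

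To control the tail sums over $p\ge 2$, I would split at $q=\lfloor L^{c_5}\rfloor$, where $c_5$ is chosen large enough that the rigidity estimates used in the proof of Lemma \ref{lem:resl} give $\lambda-\lambda_p\ge C_0 p^{2/3} N^{-1/6}$ for every $p\ge q$ with overwhelming probability. Combining this with $|(v_p)_i(v_p)_j|\le L^2/N$, the large-$p$ part is at most
$$
N\eta^2\sum_{p>q}\frac{L^2/N}{C_0^2 p^{4/3} N^{-1/3}} \;\le\; \frac{3\, L^{2-2c_1-c_5/3}}{C_0^2},
$$
which is $\le 1/(4L^2)$ for $c_1$ large. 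For $2\le p\le q$, edge universality gives, for any $\eps>0$, a constant $C_\eps$ such that $\lambda-\lambda_2\ge N^{-1/6}/C_\eps$ with probability at least $1-\eps/2$; combined with delocalization this bounds the small-$p$ contribution by $C_\eps^2\,L^{c_5+2-2c_1}$, again $\le 1/(4L^2)$ once $c_1\ge(c_5+4)/2$ and $N$ is large. This proves the first inequality of the lemma.

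For $R^{[k]}(z)$ the same $q$-split is applied to the eigenstructure of $X^{[k]}$, which is itself a Wigner matrix, so Lemma \ref{delocalization} and the rigidity bound used in Lemma \ref{lem:resl} hold for $X^{[k]}$ with overwhelming probability for each fixed $k$; a union bound over the polynomially many values $k\le N^{5/3}L^{-c_2}$ preserves overwhelming probability. The required spectral-gap estimate is transferred from $X$ to $X^{[k]}$ via Lemma \ref{lem:llk}: if $c$ is large enough then $\lambda^{[k]}-\lambda^{[k]}_2\ge\lambda-\lambda_2-2N^{-1/6}L^{-c}\ge N^{-1/6}/(2C_\eps)$ for $N$ large, and the same small-$p$/large-$p$ split then yields the second inequality. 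The main obstacle is the hierarchical bookkeeping of constants: $c_0$ is fixed from Lemma \ref{lem:resl}, then $c_5$ is chosen so the rigidity split is meaningful, then $c_1$ is chosen large enough to beat both $c_5+2-2c_1$ and $2-2c_1-c_5/3$, then $c\ge c_1+2$ is chosen to control the $p=1$ correction for $R^{[k]}$, and finally $c_2$ is taken from Lemma \ref{lem:llk} applied with this $c$, all the while verifying that the union of the single probability-$(1-\eps/2)$ edge-gap event with the chain of overwhelming-probability events still has probability at least $1-\eps$.
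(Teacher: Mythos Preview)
Your proposal is correct and follows essentially the same approach as the paper: spectral decomposition of the resolvent, isolation of the top-eigenvector term, a split of the tail at $q\sim L^{c}$ handled by delocalization plus edge rigidity (Lemma~\ref{lem:resl}) for $p>q$ and by the edge spectral gap $\lambda-\lambda_2\gtrsim N^{-1/6}$ (probability $1-\eps$) for $2\le p\le q$, with Lemma~\ref{lem:llk} supplying the transfer to $X^{[k]}$. The only cosmetic difference is that the paper packages the argument as a single bound valid for all $E$ with $|E-\lambda|\le \eta/L^2$ and then evaluates at $E=\lambda$ for both $R$ and $R^{[k]}$, whereas you work directly at $E=\lambda$ and single out the $p=1$ correction term for $R^{[k]}$ explicitly; the content and the constants bookkeeping are the same.
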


\begin{proof}
Let $\lambda  = \lambda_1  \geq \lambda_2 \geq \cdots \geq \lambda_N$ be the eigenvalues of $X$. Let $(v_1, \ldots,  v_N)$ be an eigenvector basis of $X$. Recall that 
$$
N \eta  \Im R(E + {\mathbf{i}} \eta )_{ij}  = \sum_{p=1}^N \eta^2 \frac{ N  (v_p)_i (v_p)_j }{ (\lambda_p - E )^2 + \eta ^2}~. 
$$
As in the proof of Lemma \ref{lem:resl}, from
\cite[Theorem 2.2]{MR2871147} and Lemma \ref{delocalization}, for some constants $c_0,C>0$, we have with overwhelming probability that  the following event $\cE$ holds: $|\lambda- 2 \sqrt N| \leq L^{c_0} N ^ {-1/6}$, for all integers $1 \leq p \leq N$, $ \| v_p \|^2_{\infty} \leq L / N$ and for all  $q >p$ with $q = \lfloor L^{c_0} \rfloor$ and $E$ such that $|E - 2 \sqrt N| \leq L^{c_0} N^{-1/6}$ we have 
$$
\sum_{p = q+1}^{N} \frac{ N  (v_p)_i (v_p)_j }{ (\lambda_p - E )^2 + \eta ^2}  \leq C L  N^{1/3} q^ {-1/3}~.
$$
On the other hand,  let $\cE_\delta$ be the event that $\lambda_2 \leq \lambda - \delta N^{-1/6}$. Fix $\eps >0$. From  \cite[Theorem 2.7]{MR3253704} and, e.g., \cite[Chapter 3]{MR2760897}, there exists $\delta >0$ such that 
$$
\PROB (\cE_\delta ) \geq 1 - \eps~. 
$$
On the event $\cE\cap \cE_\delta$, if $| \lambda - E| \leq (\delta/2) N^{-1/6}$, we have
$$
\sum_{p = 2}^{q} \frac{ N (v_p)_i (v_p)_j }{ (\lambda_p - E )^2 + \eta ^2} \leq \frac 4 {\delta^2}L q N^{1/3}~.
$$
Finally, if $|\lambda - E | \leq \eta/L^2$, on the event $\cE$, we find easily, if $v_i$ is i-th coordinate of $v$,
$$
 \left|  \eta^2 \frac{ N  v_i v_j }{ (\lambda - E )^2 + \eta ^2} -  N v_i v_j  \right| \leq \frac 1 {L^3}~. 
$$
For some $c_1 >0$, we thus find, that if $\eta = N^ {-1/6} L^{-c_1}$ then on the event $\cE\cap \cE_\delta$, for all $E$ such that $|\lambda - E | \leq \eta/L^2$ we have 
$$
\max_{i ,j}|  N \eta  \Im R(E + {\mathbf{i}} \eta )_{ij} - N v_i v_{j} | \leq \frac 1 {L^2}~. 
$$
We apply this last estimate $R$ and $E = \lambda$. For each $k$, let $\cE^ {[k]}$ be the  event corresponding to $\cE$ for $X^{[k]}$ instead of $X$. We apply the above estimate on the event $\cE'_k =  \cE^{[k]} \cap \cE_\delta \cap \{ \max_{p = 1 ,2} |\lambda_p - \lambda_p^{[k]}|  \leq \eta / L^2\}$ to  $R^{[k]}$ and $E = \lambda$. By Lemma \ref{lem:llk} and the union bound $\cap_{k \leq N^{5/3} L^{-c_2}} \cE'_k$ has probability at least $1 - 2 \eps$. It concludes the proof. 
\end{proof}

We may now conclude the proof of Theorem \ref{thm:main2}. Let $c_1,c_2$ be as in Lemma \ref{lem:reslk}, $k \leq N^{5/3} L^{- c_2}$ and $\eta = N^{-1/6} L^{-c_1}$. Up to increasing the value of $c_2$, we may also assume that the conclusion of Lemma \ref{lem:resresk} holds.  By  Lemma \ref{delocalization}, Lemma \ref{lem:resresk} and Lemma \ref{lem:reslk}, for any $\eps >0$, for all $N$ large enough, with probability at least $1 - \eps$, it holds that for some $c >0$: $ \sqrt N \| v\|_{\infty} \leq (\log N) ^c$, $\sqrt N \| v ^{[k]} \|_\infty \leq (\log N)^c$ and 
$$
\max_{i ,j}|   N v_i v_{j} -  N v^{[k]}_i v^{[k]}_{j} | \leq \frac 3 {L^2}.
$$
Applied to $i = j$, we get that for some $s_i \in \{-1,1\}$, 
$$
\sqrt N | s_i v_i - v_i^{[k]} | \leq \frac{\sqrt 3}{ L}. 
$$
Notably, we find 
$$
(1 - s_i s_j)  N | v_i v_j | \leq |   N v_i v_{j} -  N v^{[k]}_i v^{[k]}_{j} |  +  \frac {2 \sqrt 3} {L} (\log N) ^{c}   \leq   \frac 4 {L} (\log N) ^{c}.
$$
Let $J = \{ 1 \leq i \leq N : \sqrt N |v_i| \geq  L^{-1/3} \}$. It follows from the above inequality that for $i,j \in J$, $s_i = s_j$. Let $s$ be this common value.  We have for all $i \in J$,  
$$
\sqrt N | s v_i - v_i^{[k]} | \leq \frac{\sqrt 3}{ L}. 
$$
Moreover, for all $i \notin J$, by definition, 
$$
\sqrt N | s v_i - v_i^{[k]} | \leq  \sqrt N |v_i| + \sqrt N |v^{[k]}_i| \leq  L^{-1/3} + L^{-1/3} + \sqrt 3 L ^{-1}. 
$$
It concludes the proof of  Theorem \ref{thm:main2}.

\subsection{Proof of Lemma \ref{lem:resresk}}

{ 
The proof of Lemma \ref{lem:resresk} is based on a technical martingale argument. Thanks to the resolvent identity \eqref{eq:resid}, we will write $R^{[k]}_{ij}(z) - R_{ij}(z)$ as a sum of martingale differences up to small error terms, this is performed in Equation \eqref{eq:RkRij}. These martingales will allow us to use concentration inequalities. Each term of the martingale differences will be estimated thanks to the upper bound on resolvent entries given in Lemma \ref{loclaw}.}

{ We apply many times the resolvent identity and for technical convenience, it will be easier to have a uniform bound on our random variables. }We thus start by truncating our random variables $(X_{ij})$. Set $\tilde X_{ij} = X_{ij} \IND ( |X_{ij} | \leq (\log N)^{c})$ and $\tilde X'_{ij} = X'_{ij} \IND ( |X'_{ij} | \leq (\log N)^{c})$ with $c= 2/ \delta$. The matrix $\tilde X$ has independent entries above the diagonal. Moreover, since $\EXP \exp ( |X_{ij}|^\delta) \leq 1/ \delta$, with overwhelming probability, $X = \tilde X$ and $X' = \tilde X'$. It is also straightforward to check that $\EXP  |X_{ij}|^2 \IND ( |X_{ij} | \geq (\log N) ^{c} ) = O(   \exp ( -  (\log N)^{2}/2))$. It implies that $|\EXP \tilde X_{ij}| = O (  \exp ( - ( \log N)^2 /2 )$ and $\var (\tilde X_{ij}) = 1 + O ( \exp ( - (\log N)^2 /2))$ for $i \ne j$. We define the matrix $\bar X$ with for $i \ne j$, $$\bar X_{ij} =( \tilde X_{ij} -  \EXP \tilde X_{ij} )  / \sqrt{\var (\tilde X_{ij} )}\quad \hbox{ and } \quad \bar X_{ii}  = \tilde X_{ij} -  \EXP \tilde X_{ij}.$$ 
The matrix $\bar X$ is a Wigner matrix as in Theorem \ref{thm:main2} with  entries in $[-L/4,L/4]$. Moreover,  from Gershgorin's circle theorem \cite[Theorem 6.6.1]{HJ85},  with overwhelming probability, the operator norm of $X - \bar X$ satisfies $\| X - \bar X \| = O ( N \exp ( - ( \log N)^2/2 )$. Observe that from the spectral theorem, for any Hermitian matrix $A$, $\| (A-z)^{-1}\| \leq |\Im(z)|^ {-1}$. In particular, from the resolvent identity \eqref{eq:resid}, we get $\| (X-z)^{-1} - (\bar X - z)^{-1}\| =  \| (X-z)^{-1} (X - X') (\bar X - z)^{-1}\|\leq \Im (z)^{-2} \| X - \bar X \|   = O ( N^3 \exp ( - ( \log N)^2/2 )$ if $\Im (z) \geq N^{-1}$.  The same truncation procedure applies for $X^{[k]}$. In the proof of Lemma \ref{lem:resresk},  we may thus assume without loss of generality that the random variables $X_{ij}$ have support in $[-L/4,L/4]$. 

{ It will also be convenient to assume that the random subset $S_k$ does not contain too many points on a given row or column. To that end,} for $0 \leq  t \leq k$, let $\cF_t$ be the $\sigma$-algebra generated by the random variable $X$, $S_{k}$ and $(X'_{i_s,j_s})_{1 \leq s \leq t}$. For $1 \leq i,j \leq N$, we set $$T_{ij} = \{ t : \{ i_t  ,j_t \} \cap \{i, j \} \ne \emptyset \}~.$$
Note that $T_{ij}$ is $\cF_0$-measurable.  We have
$$
\EXP |T_{ij}| = \frac{2 k}{N+1}~. 
$$
Besides, from \cite[Proposition 1.1]{MR2288072}, for any $u >0$,
$$\PROB \left( |T_{ij}| \geq   \EXP |T_{ij}| + u \right) \leq \exp \left( - \frac{u^2  }{ 4\EXP |T_{ij}|  +2 u }  \right)~.
$$ 
If $k \leq N^{5/3} L^{-c_2}$, it follows that with overwhelming probability, the following event, say $\mathcal T$, holds: $\max_{ij} |T_{ij}| \leq 4k' /N$  where for ease of notation  we have set 
$$
k' = \min ( k , N (\log N)^2)~. 
$$

Now, let $c$ be as in Lemma \ref{loclaw} and, for $0 \leq t \leq k$, we denote by $\cE_t \in \cF_t$ the event that $\mathcal T$ holds and that the conclusion of Lemma \ref{loclaw} holds for $X^{[t]}$ and $R^{[t]}$ (with the convention $X^{[0]} = X$).  If $\cE_t$ holds, then for all  $z = E + {\mathbf{i}} \eta$ with $|2 \sqrt N - E| \leq  L^{c _0} N^{-1/6}$ and  $  \eta  =  N^{-1/6} L^{-c_1}$,  we have, 
$$
\max_{i \ne j} |R^{[t]}_{ij} (z) | \leq  \delta = L ^ {c'} N^{-5/6} \quad \hbox{ and } \quad ~ \max_{i} |R^{[t]}_{ii} (z) | \leq \delta_0 = c N^{-1/2}~,
$$
where $c' =1 + c  + \max ( c_0 / 2, c_0/4 + c_1/2) $. 

{ After these preliminaries, we may now write the resolvent expansion. Our goal is to write $R^{[k]}_{ij}(z) - R_{ij}(z)$ as a sum of martingale differences up to error terms. The outcome will be Equation \eqref{eq:RkRij} below.}
We define $X_0^{[t]}$ as the symmetric matrix obtained from $X^{[t]}$ by setting to $0$ the entries $(i_t,j_t)$ and $(j_t,i_t)$. By construction $X^{[t+1]}_0$ is $\cF_t$-measurable. We denote by $R_0^{[t]}$ the resolvent of $X_0^{[t]}$. The resolvent identity \eqref{eq:resid} implies that 
$$
R_0^{[t+1]} = R^{[t]}  + R^{[t]} ( X^{[t]}-X_0^{[t+1]})R^{[t]}     +  R^{[t]}  (X^{[t]}-X_0^{[t+1]})R^{[t]} (X^{[t]}-X_0^{[t+1]})  R^{[t+1]}_0
$$
(we omit to write the parameter $z$ for ease of notation). Now,  we set for $i \ne j$, $E^s_{ij} = e_i e_j ^* + e_j e_i^*$ and {$E_{ii}^s = e_i e_i^*$}, where $e_i$ denotes the canonical vector of $\mathbb R^n$ with all entries equal to $0$ except the $i$-th entry equal to $1$. We have 
\begin{equation}\label{eq:XtXt0}
X^{[t]}-X_0^{[t+1]} =  X_{i_{t+1} j_{t+1}} E^s_{i_{t+1} j_{t+1}} \quad  \hbox{ and } \quad X^{[t+1]}-X_0^{[t+1]} =  X'_{i_{t+1} j_{t+1}} E^s_{i_{t+1} j_{t+1}}~.
\end{equation}
We use that $|X_{ij}| \leq L/4$ and $(R^{[t+1]}_0)_{ij} \leq \eta^{-1}$. If $\cE_t$ holds, we deduce that for all $z = E + {\mathbf{i}} \eta$ with $|2 \sqrt N - E| \leq  L^{c_0} N^{-1/6}$ and  $  \eta  =  N^{-1/6} L^{-c_1}$, we have
\begin{equation}\label{eq:R0Rt}
\max_{i\ne j} |(R^{[t+1]}_0)_{ij}  | \leq  \sqrt 2 \delta \quad  \hbox{ and } \quad  \max_{i} |(R^{[t+1]}_0)_{ii}  | \leq  \sqrt 2\delta_0~. 
\end{equation}
Similarly, the resolvent identity  \eqref{eq:resid}  with $R^{[t+1]}$ and $R^{[t]}$ implies that, if $\cE_t$ holds,  for all $z = E + {\mathbf{i}} \eta$ with $|2 \sqrt N - E| \leq  L^{c_0} N^{-1/6}$ and  $  \eta  =  N^{-1/6} L^{-c_1}$, we have
\begin{equation}\label{eq:R1Rt}
\max_{i \ne j} |R^{[t+1]}_{ij}  | \leq  \sqrt 2 \delta \quad  \hbox{ and } \quad  \max_{i} |R^{[t+1]}_{ii} | \leq \sqrt  2 \delta_0~.
\end{equation}
Finally, the resolvent identity  with $R^{[t+1]}$ and $R_0^{[t+1]}$ gives
\begin{align*}
& R^{[t+1]}  \; = \;  R_0^{[t+1]} +  R_0^{[t+1]} ( X_0^{[t+1]}-X^{[t+1]})R^{[t+1]}   \\
&\quad =  \; \sum_{\ell = 0}^2 \left(R_0^{[t+1]} ( X_0^{[t+1]}-X^{[t+1]}) \right)^{\ell} R_0^{[t+1]}     +   \left(R_0^{[t+1]} ( X_0^{[t+1]}-X^{[t+1]}) \right)^{3} R^{[t+1]} ~.
\end{align*}
Note that, $\EXP [X'_{i_{t+1} j_{t+1}} | \cF_t] = 0$. We use $|X_{i_tj_t} |\leq L/4$, from \eqref{eq:XtXt0}-\eqref{eq:R0Rt}-\eqref{eq:R1Rt}, we deduce that
\begin{equation}\label{eq:ERtR0}
\left| \EXP [ R^{[t+1]}_{ij}  |\cF_t ]  - (R^{[t+1]}_0)_{ij} -s^{[t+1]}_{ij} {X'}^2_{i_{t+1}j_{t+1}} \right| \leq  a_t \quad  \hbox{ and } \quad   |R^{[t+1]}_{ij}  - (R^{[t+1]}_0)_{ij}  | \leq  b_t~,
\end{equation}
where $s^{[t]}_{ij} =  ((R_0^{[t]} E^s_{i_{t} j_{t}})^2 R_0^{[t]})_{ij}$ and, if $\cE_t$ holds, 
\begin{align*}
a_t = L^3 \delta^2 \delta_0 ^2+ L^3 \delta_0^4 \ind_{(t \in T_{ij})} \quad \hbox{ and } \quad
b_t = L \delta^2 + L \delta \delta_0 \ind_{(t \in T_{ij})} +  L \delta_0^2 \ind_{ ( \{i_t,j_t\} = \{ i,j \})}~.
\end{align*}
We rewrite, one last time the resolvent identity with  $R_0^{[t+1]}$ and $R^{[t]}$:
$$
R^{[t]} =   \sum_{\ell = 0}^2 \left(R_0^{[t+1]} ( X_0^{[t+1]}-X^{[t]}) \right)^{\ell} R_0^{[t+1]}     +   \left(R_0^{[t+1]} ( X_0^{[t+1]}-X^{[t]}) \right)^{3}R^{[t]} ~.
$$
If $\cE_t$ holds, we arrive at,  
$$
\left| \EXP [ R^{[t+1]}_{ij}  |\cF_t ]  - (R^{[t]})_{ij}  -  r^{[t+1]}_{ij} X_{i_{t+1} j_{t+1}} +  s^{[t+1]}_{ij} (X^2_{i_{t+1} j_{t+1}} - {X'}^2_{i_{t+1}j_{t+1} })   \right| \leq  2a_t~, 
$$
where $r^{[t]}_{ij} =  (R_0^{[t]} E^s_{i_{t} j_{t}} R_0^{[t]})_{ij}$. We have thus found that 
\begin{equation}\label{eq:RkRij}
R^{[k]}_{ij} - R_{ij} = \sum_{t=0}^ {k-1} \left(R^{[t+1]}_{ij}  - (R^{[t]})_{ij} \right) =  \sum_{t=0}^ {k-1} \left(R^{[t+1]}_{ij}  -  \EXP [ R^{[t+1]}_{ij}  |\cF_t ] \right) + r_{ij} + s_{ij} - s'_{ij} +a_{ij} ~, 
\end{equation}
where we have set, with $Y_{ij} = X_{ij}^2 - \EXP X_{ij}^2$, $Y'_{ij} = {X'_{ij}}^2 - \EXP X_{ij}^2$, 
$$
r_{ij} = \sum_{t=1}^{k} r^{[t]}_{ij} X_{i_{t} j_{t}}  ~,\quad s_{ij} = \sum_{t=1}^{k} s^{[t]}_{ij} Y_{i_{t} j_{t}}  ~, \quad s'_{ij} =  \sum_{t=1}^{k} s^{[t]}_{ij} Y'_{i_{t} j_{t}}  ~, \quad  |a_{ij}| \leq 2 \sum_{t=1}^k a_t~.
$$

{ In this final step of the proof, we use concentration inequalities to estimate the terms in \eqref{eq:RkRij}}. We set $Z_{t+1} =  (R^{[t+1]}_{ij}  -  \EXP [ R^{[t+1]}_{ij}  |\cF_t ] ) \IND_{\cE_t}$. We write, for any $u \geq 0$,
$$
\PROB \left( \left| \sum_{t=0}^ {k-1} \left(R^{[t+1]}_{ij}  -  \EXP [ R^{[t+1]}_{ij}  |\cF_t ] \right)\right| \geq u  \right) \leq \PROB \left( \left| \sum_{t=1}^ {k} Z_t \right| \geq u  \right) +  \sum_{t=0}^{k-1} \PROB ( \cE_t^c)~.
$$
By Lemma \ref{loclaw}, we have for any $c>0$, $\sum_{t=0}^{k-1} \PROB ( \cE_t^c) = O ( N ^{-c})$. Since $ \cE_t \in \cF_t$, we have { that} $\EXP[ Z_{t+1} | \cF_t ] = 0$. Also, from \eqref{eq:R0Rt}-\eqref{eq:ERtR0}, $|Z_{t}| \leq 2b_t$. On the event $\mathcal T$, we have 
$$
 \sqrt{\sum_{t=1}^k b_t^2 } \leq    L \delta^2 \sqrt k + L \delta \delta_0 \sqrt{\frac{4k'}{N} }  + L \delta^2_0 \leq 2 L \delta^2 \sqrt{k'}~.
$$
Azuma-Hoeffding martingale inequality implies that, for $u \geq 0$, 
$$
\PROB \left( \left| \sum_{t=1}^ {k} Z_t \right| \geq  2 u  L \delta^2 \sqrt{k'} \right) \leq 2 \exp \left( - \frac{u^2}{2}\right)~. 
$$
We apply the later inequality to $u = \log N$. We deduce that, with overwhelming probability, 
\begin{equation}\label{eq:jook}
\sum_{t=0}^ {k-1} \left(R^{[t+1]}_{ij}  -  \EXP [ R^{[t+1]}_{ij}  |\cF_t ] \right) \leq  L^2 \sqrt{k'} \delta^2~.
\end{equation}

We may treat similarly the random variable $s'_{ij}$ in \eqref{eq:RkRij}. We set $Z'_{t+1} =   s^{[t+1]}_{ij} Y'_{i_{t+1} j_{t+1}}\IND_{\cE_t}$. Note that $s^{[t+1]}_{ij}$ is $\cF_t$-measurable and $\EXP [ Y'_{i_{t+1} j_{t+1}} | \cF_t ] = 0$.  Thus $\EXP [Z'_{t+1} | \cF_t] = 0$. Moreover, since $|Y_{ij}| \leq L^2/ 16$, from \eqref{eq:R0Rt}, we find $|Z'_{t+1}| \leq b'_t =  L^2 (\delta^2 \delta_0 + \delta_0^3 \IND_{( t \in T_{ij})})$. If $\mathcal T$ holds, we get
$$
\sqrt{\sum_{t=0}^{k-1} {b'_t}^2} \leq L^2 \delta^2 \delta_0  \sqrt{k} + L^2 \delta_0^3 \sqrt{ \frac{4k'}{N}} \leq  2 L^2 \delta^2 \delta_0  \sqrt{k'} 
$$
We write, for $u \geq 0$,
$$
\PROB \left(  \left| s'_{ij} \right| \geq u \right) \leq \PROB \left(  \left| \sum_{t=1}^{k} Z'_t \right| \geq u \right)  + \sum_{t=0}^{k-1} \PROB ( \cE_{t}^c) . 
$$
From Azuma-Hoeffding martingale inequality, we deduce that, with overwhelming probability, 
\begin{equation}\label{eq:jook3}
|s'_{ij}| \leq  \sqrt{k'} \delta^2.
\end{equation}

We now estimate the random variable $r_{ij}$ in \eqref{eq:RkRij}. We will also use Azuma-Hoeffding inequality but we need to introduce a backward filtration {(because we have to deal with the random variables $X_{i_t,j_t}$ instead of $X'_{i_t,j_t}$ as in $s'_{ij}$)}. We define $\cF'_t$ as the $\sigma$-algebra generated by the random variables, $X'$, $S_k$ and $\{ (X_{ij}) : \{i,j\} \notin \{ i_s, j_s \}, s \leq t \}$. By construction $X^{[t]}$ and $X_0^{[t]}$   are $\cF'_{t}$-measurable random variables. Let $\cE'_{t} \in \cF'_{t} $ be the event that $\mathcal T$ holds and that the conclusion of Lemma \ref{loclaw} holds for $X^{[t]}$.  If $\cE'_{t}$ holds, then for all  $z = E + {\mathbf{i}} \eta$ with $|2 \sqrt N - E| \leq  L^{c_0} N^{-1/6}$ and  $  \eta  =  N^{-1/6} L^{-c_1}$,  we have, 
$$
\max_{i \ne j} |R^{[t]}_{ij} (z) | \leq  \delta \quad \hbox{ and } \quad ~ \max_{i} |R^{[t]}_{ii} (z) | \leq \delta_0~.
$$
Arguing as in \eqref{eq:R0Rt}, if $\cE'_t$ holds then 
$$
\max_{i \ne j} |(R_0^{[t]})_{ij}  | \leq  \sqrt 2\delta \quad \hbox{ and } \quad ~ \max_{i} |(R^{[t]}_0)_{ii}  | \leq \sqrt 2\delta_0~.
$$
The variable $r^{[t]}_{ij}$ is $\cF'_{t}$-measurable and $\EXP (X_{i_{t} j_{t}} | \cF'_{t} ) = 0$.  We write, for $u \geq 0$,
$$
\PROB \left(  \left| r_{ij} \right| \geq u \right) \leq \PROB \left(  \left| \sum_{t=0}^{k-1} \tilde Z_t \right| \geq u \right)  + \sum_{t=0}^{k-1} \PROB ( {\cE'_{t}}^c)~ ,
$$
where $\tilde Z_{t+1} = r^{[t]}_{ij} X_{i_{t} j_{t}}  \IND_{\cE'_{t}}$. We have $\EXP ( \tilde Z_{t+1} | \cF'_t ) = 0$ and $$|\tilde Z_t| \leq \tilde b_t = L \delta^2 +   L \delta \delta_0 \ind_{(t \in T_{ij})} +  L \delta_0^2 \ind_{ \{ i_t,j_t\} =  \{ i,j \})}~.$$ 
Arguing as above, from Azuma-Hoeffding martingale inequality, we deduce that with overwhelming probability, 
\begin{equation}\label{eq:jook2}
|r_{ij}|  \leq L^2\sqrt  {k'} \delta^2.
\end{equation}

Similarly, repeating the argument leading to \eqref{eq:jook3} with $s_{ij}$ and the filtration $(\cF'_t)$ gives with overwhelming probability, 
\begin{equation}\label{eq:jook4}
|s_{ij}| \leq  \sqrt{k'} \delta^2.
\end{equation}

We note also that if $\mathcal T$ holds then 
$$
|a_{ij}| \leq 2 \sum_{t=1}^k a_t \leq 2 L^3 \delta^2 \delta^2_0 k + 2L^3 \delta_0^4 \frac{4k'}{N} \leq \sqrt{k'} \delta^2,
$$
where the last inequality holds provided that $k \leq N^{5/3}$. So finally, from \eqref{eq:RkRij}-\eqref{eq:jook}-\eqref{eq:jook3}-\eqref{eq:jook2}-\eqref{eq:jook4}, we have proved that for a given $z = E + {\mathbf{i}} \eta$  such that $|E - 2 \sqrt N| \leq L^{c_0} N^{-1/6}$ and $\eta = N^{-1/6} L ^{-c_1}$, with overwhelming probability
$$
\left| R^{[k]}_{ij} (z)- R_{ij} (z) \right| \leq 3 L ^2  \sqrt {k'} \delta^2~,
$$
where the  inequality holds provided that $k \leq N^{5/3}$. 
Recall that $|R_{ij} (E + {\mathbf{i}} \eta) - R_{ij} ( E' + {\mathbf{i}} \eta) | \leq \eta^{-2} |E - E'|$. By a net argument (as in the proof of Lemma \ref{lem:resres0}), we deduce that with overwhelming probability for all $z = E + {\mathbf{i}} \eta$  such that $|2 \sqrt N - E| \leq  L^{c_0} N^{-1/6}$, $\left| R^{[k]}_{ij} (z)- R_{ij} (z) \right| \leq 4 L^2 \sqrt {k'} \delta^2$.  It concludes the proof of Lemma  \ref{lem:resresk}.

\subsection{Proof of Lemma \ref{lem:llk}}

Let $c_0$ be as in Lemma \ref{lem:resl} and $c >0$. We set $c_1 = c_0/2 +2c$ and let $\eta = N^{-1/6} L ^{-c_1}$. Let $p \in \{1,2\}$. We start with by bounding $\min_{ j}|\lambda_p - \lambda_j^{[k]}|$ and $\min_{j}|\lambda^{[k]}_p - \lambda_j|$. Since $X$ and $X^{[k]}$ have the same distribution, we only prove that with overwhelming probability 
\begin{equation}\label{eq:pton}
\min_{ 1 \leq j \leq N}|\lambda_p - \lambda_j^{[k]}| \leq 2 L^{c_0/2} \eta.
\end{equation}
By Lemma \ref{lem:resl}, with overwhelming probability,  $|\lambda_p - 2 \sqrt N | \leq L^{c_0} N^{-1/6}$ and for some integer $1 \leq i\leq N$, 
$$
N \eta^{-1} \Im R (\lambda_p + {\mathbf{i}} \eta)_{ii} \geq \frac 1 2 \eta^{-2}~, 
$$
and, 
$$
N \eta^{-1} \Im R^{[k]} (\lambda_p + {\mathbf{i}} \eta)_{ii} \leq L^{c_0} \min_{1 \leq j \leq N} (\lambda_p - \lambda_j^{[k]})^{-2}~. 
$$
By Lemma \ref{lem:resresk}, we deduce that if $k \leq N^{5/3} L^{-c_2}$, with overwhelming probability,
$$
\frac 1 4 \eta^{-2} \leq L^{c_0}\min_{1 \leq j \leq N} (\lambda_p - \lambda_j^{[k]})^{-2}~. 
$$
It proves  \eqref{eq:pton}.

We may now conclude the proof of Lemma \ref{lem:llk}. Fix $\eps >0$. As already noticed, from  \cite[Theorem 2.7]{MR3253704}, there exists $\delta >0$ such that, with probability at least $1 - \eps$, $\lambda_2 < \lambda - \delta N^{-1/6}$. From what precedes, with probability at least $1 - 2 \eps$, $\cE_\delta$ holds and for all $k \leq N^{5/3} L^{-c_2}$, we have 
$$
\max \left( \min_{ 1 \leq j \leq N}|\lambda^{[k]}_p - \lambda_j|  , \min_{ 1 \leq j \leq N}|\lambda_p - \lambda_j^{[k]}| \right) \leq \alpha~,
$$
with $\alpha = 2 L^{c_0/2} \eta$. 
On this event, we readily find $|\lambda - \lambda^{[k]} | \leq \alpha$ and for some $p$,   $|\lambda_p - \lambda^{[k]}_2| \leq \alpha$. Assume that this last inequality is false for $p \ne 2$. Since $2\alpha < \delta N^{-1/6}$, if $p \ne 2$, then $p \leq 3$ and we deduce that $\lambda_2 > \lambda_2^{[k]} + \alpha$. We note that, on our event, for some $q$, we have $|\lambda_2 - \lambda_q^{[k]}| \leq \alpha$. In particular, $\lambda_q^{[k]} > \lambda_2^{[k]}$. So necessarily, $q = 1$ and, from the triangle inequality, $| \lambda_2 - \lambda_1| \leq 2 \alpha$. This is a contradiction since $2\alpha < \delta N^{-1/6}$. It  concludes the proof of Lemma \ref{lem:llk}.

{\textbf{Acknowledgments} We would like to thank Jaehun Lee for pointing out a mistake in the proof of Lemma \ref{lem:monotonicity_second} in an early version of this paper. We also would like to thank the referees for their valuable reports.}

\bibliographystyle{plain}
\bibliography{noisesensitivity}

\end{document}